\newtheorem{thm}{Theorem}[section]
\newtheorem{lem}{Lemma}[section]
\newtheorem{prop}{Proposition}[section]
\theoremstyle{definition}
\newtheorem{defn}{Definition}[section]
\theoremstyle{remark}
\newtheorem{rem}{Remark}[section]
\numberwithin{equation}{section}
\title[Nonlocal inverse problem for Schr\"odinger operator]{Simultaneously recovering potentials and embedded obstacles for anisotropic fractional Schr\"odinger operators}
\author{Xinlin Cao}
\address{Department of Mathematics, Hong Kong Baptist University, Kowloon, Hong Kong SAR, China}
\email{xlcao.math@foxmail.com}
\author{Yi-Hsuan Lin}
\address{Department of Mathematics, University of
Washington, Seattle, USA}
\email{yihsuanlin3@gmail.com}
\author{Hongyu Liu}
\address{Department of Mathematics, Hong Kong Baptist University, Kowloon, Hong Kong SAR, China}
\email{hongyu.liuip@gmail.com; hongyuliu@hkbu.edu.hk}
\begin{document}

\begin{abstract}
Let $A\in\mathrm{Sym}(n\times n)$ be an elliptic 2-tensor. Consider the anisotropic fractional Schr\"odinger operator $\mathscr{L}_A^s+q$, where $\mathscr{L}_A^s:=(-\nabla\cdot(A(x)\nabla))^s$, $s\in (0, 1)$ and $q\in L^\infty$. We are concerned with the simultaneous recovery of $q$ and possibly embedded soft or hard obstacles inside $q$ by the exterior Dirichlet-to-Neumann (DtN) map outside a bounded domain $\Omega$ associated with $\mathscr{L}_A^s+q$. It is shown that a single measurement can uniquely determine the embedded obstacle, independent of the surrounding potential $q$. If multiple measurements are allowed, then the surrounding potential $q$ can also be uniquely recovered. These are surprising findings since in the local case, namely $s=1$, both the obstacle recovery by a single measurement and the simultaneous recovery of the surrounding potential by multiple measurements are longstanding problems and still remain open in the literature. Our argument for the nonlocal inverse problem is mainly based on the strong uniqueness property and Runge approximation property for anisotropic fractional Schr\"odinger operators. 

\medskip

\noindent{\bf Keywords}. Nonlocal inverse problem, fractional elliptic operators, simultaneous recovering, strong uniqueness property, Runge approximation property  \smallskip

\noindent{\bf Mathematics Subject Classification (2010)}: Primary  35R30; Secondary 26A33, 35J10
\end{abstract}


\maketitle

\tableofcontents

\section{Introduction}

\subsection{Mathematical setup and statement of the main results}

Let $\mathrm{Sym}(n\times n)$ signify the space of real-valued $n\times n$ symmetric matrices for $n\geq 2$. Let $A(x)=(a_{ij}(x))_{i,j=1}^n\in\mathrm{Sym}(n\times n)$, $x\in\mathbb{R}^n$. Throughout, it is assumed that $A$ satisfies the following uniform ellipticity condition for some $\gamma\in (0,1)$,  
\begin{equation}\label{ellipticity and symmetry condition} 
\gamma|\xi|^{2}\leq\sum_{i,j=1}^{n}a_{ij}(x)\xi_{i}\xi_{j}\leq\gamma^{-1}|\xi|^{2} \ \mbox{ for all } \xi, x\in\mathbb{R}^{n},
\end{equation}
and $A(x)$ is $C^\infty$-smooth for any $x\in \mathbb R^n$.
Define $\mathscr{L}_A$ to be the following PDO (partial differential operator),
\begin{equation}\notag
\mathscr{L}_A:=-\nabla\cdot(A(x)\nabla),
\end{equation}
Let $s\in (0, 1)$ be a constant and introduce the following nonlocal PDO,
\begin{equation}\notag
\mathscr{L}_A^{s}=(-\nabla\cdot(A(x)\nabla)^{s},
\end{equation} 
whose rigorous definition shall be given in Section 2.

Let $\Omega$ and $D$ be two bounded open sets in $\mathbb{R}^n$ such that $D\Subset\Omega$ and, $\mathbb{R}^n\backslash\overline{\Omega}$ and $\Omega\backslash\overline{D}$ are connected. Let $q\in L^\infty(\Omega\backslash\overline{D})$ be a real-valued function. Physically speaking, $q$ and $D$, respectively, represent a potential and an embedded impenetrable obstacle inside the potential. 

Consider the following nonlocal problem associated with $q$ and $D$, 
\begin{equation}
\begin{cases}
\mathscr{L}_A^{s}u+qu=0 & \mbox{ in }\ \ \Omega\backslash\overline{D},\\
\mathcal{B}u=0 & \mbox{ in }\ \ D,\\
u=g & \mbox{ in }\ \ \Omega_{e}:=\mathbb{R}^{n}\backslash\overline{\Omega},
\end{cases}\label{Fractional Schrodinger equation}
\end{equation}
where $u\in H^{s}(\mathbb{R}^{n})$ is a weak solution of \eqref{Fractional Schrodinger equation}
with $g\in H^{s}(\mathbb R^n)$ being an exterior Dirichlet data. In \eqref{Fractional Schrodinger equation}, $\mathcal{B} u:=u$ if $D$ is a {\it soft} obstacle, and $\mathcal{B}u:=\mathscr{L}_A^s u$ if $D$ is a {\it hard} obstacle. It is known
that \eqref{Fractional Schrodinger equation} is uniquely solvable
if $\{0\}$ is not an eigenvalue of the operator
$\mathscr{L}_A^{s}+q$, in the following sense
\begin{equation}
\begin{cases}
\mbox{if } w\in H^{s}(\mathbb{R}^{n})\mbox{ solves }(\mathscr{L}_A^{s}+q)w=0\mbox{ in }\Omega\backslash\overline{D},\\
w=0\mbox{ in }\Omega_{e},\mbox{ and } \mathcal Bu=0 \text{ in }D,\\
\mbox{then } w\equiv 0.
\end{cases}\label{eq:eigenvalue condition}
\end{equation}
Throughout, we assume that $\{0\}$ is not an eigenvalue of $\mathscr{L}_A^{s}+q$, and hence \eqref{Fractional Schrodinger equation} is well-posed. In particular, one has the following well-defined Dirichlet-to-Neumann (DtN) map associated to the nonlocal problem (\ref{Fractional Schrodinger equation}),
\[
\Lambda_{D,q}:\mathbb{H}\to\mathbb{H}^{*},
\]
and $\Lambda_{D,q}$ is formally given by 
\begin{equation}\notag
\Lambda_{D,q}\psi:=\mathscr{L}_A^{s}u_{\psi}|_{\Omega_{e}},\label{DtN map}
\end{equation}
where $u_{\psi}$ is the unique solution to \eqref{Fractional Schrodinger equation}
with $u_{\psi}=\psi$ in $\Omega_{e}$. In the subsequent section, we shall introduce more details of the abstract Banach spaces $\mathbb{H}$ and $\mathbb{H}^{*}$. We regard the DtN map $\Lambda_{D,q}$ as the exterior measurement for our inverse problem study. In this article, we are mainly concerned with the recovery of the embedded obstalce $D\Subset\Omega$ and the surrounding potential $q(x)\in L^{\infty}(\Omega\backslash\overline{D})$
by using the exterior DtN map of $(\mathscr{L}_A^{s}+q)u=0$ in $\Omega\backslash\overline{D}$.

For the inverse problem described above, our main results can be stated as follows. 
\begin{thm}
	\label{Main Theorem 1} For $n\ge2$, let $\Omega\subset\mathbb{R}^{n}$
	be an open bounded set, $D_{1},D_{2}\Subset\Omega$ be two open subsets of $\Omega$ 
	and $\mathcal{O}_1,\mathcal O_2\subset\Omega_{e}$ be nonempty open sets. Suppose $D_j$ and
	$q_{j}\in L^{\infty}(\Omega\backslash\overline{D_{j}})$ satisfy
	the eigenvalue condition \eqref{eq:eigenvalue condition}, $j=1,2$. Let $\Lambda_{D_j,q_j} $be the
	DtN maps for the nonlocal equations $\left(\mathscr{L}_A^{s}+q_{j}\right)u_{j}=0\mbox{ in }\Omega\backslash\overline{D_{j}}$
	with $u_{j}=0$ in $D_{j}$ for $j=1,2$, then the following statements hold.
	
	1. For any given $g\in C^\infty_c(\mathcal O_1)$ with $g\not \equiv 0$ in $\mathcal O_1$, if
	\[
	\Lambda_{D_{1},q_{1}}g|_{\mathcal{O}_2}=\Lambda_{D_{2},q_{2}}g|_{\mathcal{O}_2},
	\]
	then one has $D_{1}=D_{2}$.
	
	2. Furthermore, if
	\[
	\Lambda_{D_{1},q_{1}}g|_{\mathcal{O}_2}=\Lambda_{D_{2},q_{2}}g|_{\mathcal{O}_2}\mbox{ for all }g\in C_{c}^{\infty}(\mathcal{O}_1),
	\]
	then one has $q_{1}=q_{2}$ in $\Omega\backslash\overline{D}$, where $D:=D_{j}$
	for $j=1,2$.\end{thm}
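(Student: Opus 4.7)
The proof splits into two parts matching the two statements. Throughout, let $u_j\in H^s(\mathbb R^n)$ denote the unique solution of \eqref{Fractional Schrodinger equation} with $(D,q)=(D_j,q_j)$ and exterior datum $g$, and set $w:=u_1-u_2$.

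\textbf{Part 1.} My plan is to analyse $w$. Since $u_1=u_2=g$ in $\Omega_e$, I have $w\equiv 0$ there, and in particular on $\mathcal O_2$. By the definition of the DtN map, the hypothesis $\Lambda_{D_1,q_1}g|_{\mathcal O_2}=\Lambda_{D_2,q_2}g|_{\mathcal O_2}$ forces $\mathscr L_A^{s}w=0$ on $\mathcal O_2$. Thus $w$ and $\mathscr L_A^{s}w$ both vanish on the nonempty open set $\mathcal O_2$, so the strong uniqueness property for $\mathscr L_A^{s}$ (developed earlier in the paper) yields $w\equiv 0$ in $\mathbb R^n$, i.e.\ $u_1=u_2$ globally. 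Assuming for contradiction $D_1\neq D_2$, I pick a nonempty open set $V\Subset D_1\setminus\overline{D_2}$; the soft condition gives $u_1=0$ on $V$, hence $u_2=0$ on $V$ as well. Since $V\subset\Omega\setminus\overline{D_2}$, $u_2$ satisfies $(\mathscr L_A^{s}+q_2)u_2=0$ on $V$, and combined with $u_2\equiv 0$ on $V$ this forces $\mathscr L_A^{s}u_2=-q_2u_2\equiv 0$ on $V$ too. A second application of strong uniqueness forces $u_2\equiv 0$, hence $g=u_2|_{\Omega_e}\equiv 0$, contradicting $g\not\equiv 0$ on $\mathcal O_1$. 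Therefore $D_1=D_2$.

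\textbf{Part 2.} From Part 1, $D:=D_1=D_2$. Given $g_1\in C_c^\infty(\mathcal O_1)$ and $g_2\in C_c^\infty(\mathcal O_2)$, let $u_j^{(k)}$ be the solution with potential $q_k$ and exterior datum $g_j$. The plan is the standard Alessandrini-type identity: pairing the equations for $u_1^{(1)}$ and $u_2^{(2)}$ through the bilinear form associated with $\mathscr L_A^{s}$ on $H^s(\mathbb R^n)$, together with the self-adjointness of $\mathscr L_A^{s}$ and the soft condition $u=0$ on $D$, produces
\[
\bigl\langle(\Lambda_{D,q_1}-\Lambda_{D,q_2})g_1,g_2\bigr\rangle=\int_{\Omega\setminus\overline{D}}(q_1-q_2)\,u_1^{(1)}\,u_2^{(2)}\,dx.
\]
The hypothesis makes the left-hand side vanish for all admissible $(g_1,g_2)$. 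I would then invoke the Runge approximation property for the soft-obstacle problem associated with $\mathscr L_A^{s}+q_k$: restrictions to $\Omega\setminus\overline{D}$ of solutions with exterior data in $C_c^\infty(\mathcal O_k)$ form an $L^2(\Omega\setminus\overline{D})$-dense subspace. For arbitrary $\phi_1,\phi_2\in C_c^\infty(\Omega\setminus\overline{D})$, approximating $\phi_1$ by $u_1^{(1)}$'s and $\phi_2$ by $u_2^{(2)}$'s and passing to the limit yields
\[
\int_{\Omega\setminus\overline{D}}(q_1-q_2)\,\phi_1\phi_2\,dx=0.
\]
Choosing $\phi_1$ to be a smooth cutoff equal to $1$ on $\mathrm{supp}\,\phi_2$ then gives $\int(q_1-q_2)\phi_2\,dx=0$ for arbitrary $\phi_2$, whence $q_1=q_2$ almost everywhere in $\Omega\setminus\overline{D}$.

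\textbf{Main obstacles.} The overall strategy---difference solution, strong uniqueness, Alessandrini pairing, Runge density---is by now standard in nonlocal inverse problems once the right functional-analytic machinery is in place. The genuine work lies in having at one's disposal both the strong uniqueness property and the Runge approximation property \emph{for the anisotropic} operator $\mathscr L_A^{s}$ under the mere assumption that $A$ is smooth and uniformly elliptic (the isotropic case $A=\mathrm{Id}$ being the well-studied fractional Laplacian), together with an Alessandrini identity that correctly accommodates the soft-obstacle condition inside $D$. These preliminary analytical ingredients are the crux on which the whole argument rests, and I would expect them to be the principal content of the preceding sections.
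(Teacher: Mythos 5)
Your proposal is correct and follows essentially the same route as the paper: in Part~1 you form the difference $w=u_1-u_2$, observe $w=\mathscr L_A^s w=0$ on $\mathcal O_2$, invoke strong uniqueness to get $u_1\equiv u_2$, and then pick an open set in the symmetric difference of the obstacles and apply strong uniqueness a second time to reach a contradiction with $g\not\equiv0$; in Part~2 you combine the Alessandrini-type integral identity (Lemma~\ref{prop:Integral identity}) with the Runge density (Lemma~\ref{lemma:approxiamtion}) to conclude $q_1=q_2$. The only cosmetic differences are that the paper approximates the constant function $1$ directly in $L^2(\Omega\setminus\overline{D})$ rather than using a smooth cutoff, and it picks its open set in $D_2\setminus\overline{D_1}$ rather than $D_1\setminus\overline{D_2}$; both variants are immaterial.
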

Moreover, if we further assume $q(x)\neq 0$ for any $x\in \Omega$, then we have the following unique recovery result for the sound hard case.
\begin{thm}
	\label{Main Theorem 2} Let $\Omega, \mathcal{O}_j$ and $D_j, q_j$, $j=1,2$, be the same as those described in Theorem~\ref{Main Theorem 1}. Let $\Lambda_{D_j,q_j} $be the
	DtN maps for the nonlocal equations $\left(\mathscr{L}_A^{s}+q_{j}\right)u_{j}=0\mbox{ in }\Omega\backslash\overline{D_{j}}$
	with $\mathscr L_A^s u_{j}=0$ in $D_{j}$ for $j=1,2$, then the following statements hold.
	
	1. We further assume that $q_j(x)\neq 0$ for $x\in \Omega$ and $j=1,2$. For any given $g\in C^\infty_c(\mathcal O_1)$ with $g\not \equiv 0$ in $\mathcal O$, if  
	\[
	\Lambda_{D_{1},q_{1}}g|_{\mathcal{O}_2}=\Lambda_{D_{2},q_{2}}g|_{\mathcal{O}_2},
	\]
	then one has $D_{1}=D_{2}$.
	
	2. Furthermore, if
	\[
	\Lambda_{D_{1},q_{1}}g|_{\mathcal{O}_2}=\Lambda_{D_{2},q_{2}}g|_{\mathcal{O}_2}\mbox{ for all }g\in C_{c}^{\infty}(\mathcal{O}_1),
	\]
	then one has $q_{1}=q_{2}$ in $\Omega\backslash\overline{D}$, where $D:=D_{j}$
	for $j=1,2$.
	\end{thm}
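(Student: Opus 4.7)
The overall plan is to adapt the two-step strategy of Theorem~\ref{Main Theorem 1} (exterior unique continuation for obstacle recovery, followed by an Alessandrini-type integral identity combined with the Runge approximation property for potential recovery) to the hard-obstacle transmission condition $\mathscr{L}_A^s u_j=0$ in $D_j$. Two features distinguish the hard case from the soft one: solutions no longer vanish inside $D_j$, and the extra hypothesis $q_j\neq 0$ in $\Omega$ must be exploited at the step where, in the soft case, the Dirichlet condition $u_j=0$ in $D_j$ closed the argument.

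\emph{Part 1 (obstacle determination).} Given a single $g\in C^\infty_c(\mathcal{O}_1)$ with the prescribed DtN agreement on $\mathcal{O}_2$, set $w:=u_1-u_2\in H^s(\mathbb R^n)$. Since $u_1=u_2=g$ in $\Omega_e\supset\mathcal O_2$, and $\mathscr{L}_A^s w=(\Lambda_{D_1,q_1}-\Lambda_{D_2,q_2})g=0$ on $\mathcal O_2$, the strong uniqueness property for $\mathscr{L}_A^s$ forces $w\equiv 0$, i.e.\ $u_1\equiv u_2$ in $\mathbb R^n$. Suppose now, for contradiction, that $D_1\neq D_2$, and (up to relabelling) choose a nonempty open set $\mathcal V\subset D_2\setminus\overline{D_1}$. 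On $\mathcal V$ the two equations $\mathscr{L}_A^s u_1+q_1 u_1=0$ (since $\mathcal V\subset\Omega\setminus\overline{D_1}$) and $\mathscr{L}_A^s u_2=0$ (since $\mathcal V\subset D_2$) hold simultaneously; combined with $u_1\equiv u_2$, this gives $\mathscr{L}_A^s u_1=0$ and hence $q_1 u_1=0$ in $\mathcal V$. The standing hypothesis $q_1\neq 0$ in $\Omega$ yields $u_1\equiv 0$ on $\mathcal V$, so both $u_1$ and $\mathscr{L}_A^s u_1$ vanish on the open set $\mathcal V$; a second application of SUCP forces $u_1\equiv 0$ in $\mathbb R^n$, contradicting $u_1|_{\mathcal O_1}=g\not\equiv 0$.

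\emph{Part 2 (potential determination).} With $D_1=D_2=:D$, the hard-obstacle problem for $(\mathscr{L}_A^s+q_j)$ is equivalent to the standard nonlocal Schr\"odinger problem $\mathscr{L}_A^s u+\widetilde q_j u=0$ in $\Omega$, where $\widetilde q_j:=q_j\chi_{\Omega\setminus\overline{D}}$ is the zero extension of $q_j$ into $D$. This reduction places us within the framework in which the Alessandrini-type identity
\[
\langle(\Lambda_{D,q_1}-\Lambda_{D,q_2})g,\,h\rangle=\int_{\Omega\setminus\overline{D}}(q_1-q_2)\,u_1\,v_2\,dx
\]
holds, where $u_1$ is the solution for $(D,q_1)$ with exterior datum $g\in C^\infty_c(\mathcal O_1)$ and $v_2$ is the solution for $(D,q_2)$ with exterior datum $h\in C^\infty_c(\mathcal O_2)$. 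The DtN hypothesis makes the left-hand side vanish for every admissible pair $(g,h)$. The Runge approximation property for the anisotropic fractional Schr\"odinger operator then furnishes bilinear density of the products $u_1 v_2$ in a sufficiently large subspace of $L^1(\Omega\setminus\overline{D})$, from which one concludes $q_1=q_2$ in $\Omega\setminus\overline{D}$.

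\emph{Main obstacle.} The chief technical point is the reliance on two deep tools for the \emph{anisotropic} nonlocal operator $\mathscr{L}_A^s$: the strong uniqueness (used twice in Part 1) and the Runge approximation (used in Part 2). Both are substantially more delicate than their constant-coefficient counterparts, and I will rely on the versions established earlier in the paper. A secondary subtlety specific to the hard case is that the reduction to $\widetilde q_j$ changes the potential discontinuously across $\partial D$, so one must verify that the well-posedness condition \eqref{eq:eigenvalue condition}, the bilinear identity above, and the Runge density all remain valid for the merely $L^\infty$, sign-changing potential $\widetilde q_j$.
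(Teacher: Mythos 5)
Your Part 1 reproduces the paper's argument for the hard obstacle almost verbatim: use the DtN agreement on $\mathcal O_2$ plus the exterior Dirichlet agreement on $\Omega_e$ to obtain $w=\mathscr L_A^s w=0$ on an open set, deduce $u_1\equiv u_2$ by the strong uniqueness property, then on a component $\mathcal V\Subset D_2\setminus\overline{D_1}$ use $\mathscr L_A^s u_1=\mathscr L_A^s u_2=0$ and the equation $(\mathscr L_A^s+q_1)u_1=0$ together with $q_1\neq 0$ to force $u_1=\mathscr L_A^s u_1=0$ on $\mathcal V$, and apply strong uniqueness again. This is exactly Case~2 of the paper's Theorem~\ref{thm:uniqueness of D}, so Part 1 is correct and essentially identical.

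Part 2 takes a genuinely different route. The paper keeps the obstacle explicit: it proves the Runge approximation (Lemma~\ref{lemma:approxiamtion}) directly for solutions satisfying $\mathcal B u=0$ in $D$, and then combines it with the Alessandrini identity of Lemma~\ref{prop:Integral identity} by approximating $\varphi$ and $1$ separately by $r_{\Omega\setminus\overline D}u_1^{(k)}$ and $r_{\Omega\setminus\overline D}u_2^{(k)}$. You instead reformulate the hard-obstacle problem as a plain nonlocal Schr\"odinger equation on all of $\Omega$ with the zero-extended potential $\widetilde q_j:=q_j\chi_{\Omega\setminus\overline D}$, so that the exterior problem, the DtN map, the eigenvalue condition, the integral identity, and the Runge density are all those of the obstacle-free problem. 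This reformulation is correct (in $D$ both formulations read $\mathscr L_A^s u=0$, and in $\Omega\setminus\overline D$ they read $\mathscr L_A^s u+q u=0$, with the same exterior data, so the solution sets and DtN maps coincide). In fact your route is arguably cleaner for the hard case: the paper's proof of Lemma~\ref{lemma:approxiamtion} uses at a key point that $u_g-g\in\widetilde H^s(\Omega\setminus\overline D)$ (so that the dual problem, whose weak form is tested against $\widetilde H^s(\Omega\setminus\overline D)$, can be applied to $u_g-g$), and this membership relies on $u_g$ vanishing in $D$ — i.e.\ the soft condition. Under your reformulation the natural test space is $\widetilde H^s(\Omega)$, to which $u_g-g$ automatically belongs, so the duality argument goes through without caveat. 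One should still, as you flag, note that $\widetilde q_j$ is merely $L^\infty$ and may change sign across $\partial D$, but the paper's well-posedness, integral identity and Runge lemmas are stated and proved for arbitrary $L^\infty$ potentials, and the eigenvalue condition~\eqref{eq:eigenvalue condition} for the obstacle problem is literally the same as the non-eigenvalue condition for $\mathscr L_A^s+\widetilde q$ on $\Omega$, so these points are not actual obstacles. The only thing missing is that you state "bilinear density of the products $u_1 v_2$" somewhat loosely — the paper's (and the standard) way to finish is to approximate $\varphi$ with $u_1^{(k)}$ and $1$ with $v_2^{(k)}$ and pass to the limit in the integral identity; you should make that explicit.
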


By the first statement in Theorem~\ref{Main Theorem 1} or \ref{Main Theorem 2}, a single pair of nontrivial Cauchy data $(g, \Lambda_{D, q}g)$ is sufficient to uniquely recover the embedded soft or hard obstacle $D$, independent of the surrounding potential $q$. It is also noted that no restrictive regularity assumption is required on the obstacle $D$. If multiple measurements are used, then both the embedded obstacle and the surrounding potential can be uniquely recovered. We can further show that the recovery of the embedded obstacle can be achieved without knowing it is soft or hard. Indeed, by virtue of Theorems~\ref{Main Theorem 1} and \ref{Main Theorem 2}, it suffices for us to establish the following result. 
	
\begin{thm}
	\label{Main Theorem 3} Let $\Omega, \mathcal{O}_j$ and $D_j, q_j$, $j=1,2$, be the same as those described in Theorem~\ref{Main Theorem 1}. Let $\Lambda_{D_j,q_j} $be the
	DtN maps for the nonlocal equations $\left(\mathscr{L}_A^{s}+q_{j}\right)u_{j}=0\mbox{ in }\Omega\backslash\overline{D_{j}}$
	with either
	\begin{align}
	\notag	u_1=0\text{ in }D_1 \text{ and }\mathscr L_A^s u_2 =0 \text{ in }D_2
	\end{align}
	or
	\begin{align}
	\notag \mathscr L_A^s u_1 =0 \text{ in }D_1\text{ and }u_2=0\text{ in }D_2,
	\end{align}
	then the following statements hold.
	
	1. We further assume that $q_j(x)\neq 0$ for $x\in \Omega$ and $j=1,2$. For any given $g\in C^\infty_c(\mathcal O_1)$ with $g\not \equiv 0$ in $\mathcal O_1$, if  
	\[
	\Lambda_{D_{1},q_{1}}g|_{\mathcal{O}_2}=\Lambda_{D_{2},q_{2}}g|_{\mathcal{O}_2},
	\]
	then one has $D_{1}=D_{2}$.
	
	2. Furthermore, if
	\[
	\Lambda_{D_{1},q_{1}}g|_{\mathcal{O}_2}=\Lambda_{D_{2},q_{2}}g|_{\mathcal{O}_2}\mbox{ for all }g\in C_{c}^{\infty}(\mathcal{O}_1),
	\]
	then one has $q_{1}=q_{2}$ in $\Omega\backslash\overline{D}$, where $D:=D_{j}$
	for $j=1,2$.
	
	\end{thm}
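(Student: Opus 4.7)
The plan is to combine the strong uniqueness property of $\mathscr{L}_A^s$ (the same one underpinning Theorems~\ref{Main Theorem 1} and~\ref{Main Theorem 2}) with a three--piece geometric comparison of $D_1$ and $D_2$, now forced by the fact that the two obstacles carry \emph{different} interior conditions. Since the two displayed cases in the statement are symmetric, I will fix attention on the configuration $u_1=0$ in $D_1$ and $\mathscr{L}_A^s u_2=0$ in $D_2$.

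For Part~1, fix $g\in C_c^\infty(\mathcal{O}_1)$ with $g\not\equiv 0$ and let $u_j\in H^s(\mathbb{R}^n)$ be the corresponding solutions. The hypothesis $\Lambda_{D_1,q_1}g=\Lambda_{D_2,q_2}g$ on $\mathcal{O}_2$, together with $u_1=u_2=g$ in $\Omega_e$, says that $w:=u_1-u_2$ vanishes in $\Omega_e$ and satisfies $\mathscr{L}_A^s w=0$ in $\mathcal{O}_2$; the strong uniqueness property then gives $u_1\equiv u_2$ on $\mathbb{R}^n$. I would then consider the three disjoint open pieces $D_1\cap D_2$, $D_1\setminus\overline{D_2}$, and $D_2\setminus\overline{D_1}$ and argue that each must be empty. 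On $D_1\cap D_2$, $u_1=0$ by the soft condition on $D_1$ while $\mathscr{L}_A^s u_1=\mathscr{L}_A^s u_2=0$ by the hard condition on $D_2$, so strong uniqueness forces $u_1\equiv 0$, contradicting $g\not\equiv 0$. On $D_1\setminus\overline{D_2}$, $u_2=u_1=0$ while $(\mathscr{L}_A^s+q_2)u_2=0$ holds there, hence $\mathscr{L}_A^s u_2=0$ and again $u_2\equiv 0$. On $D_2\setminus\overline{D_1}$, $\mathscr{L}_A^s u_1=\mathscr{L}_A^s u_2=0$ while $(\mathscr{L}_A^s+q_1)u_1=0$ outside $\overline{D_1}$, so $q_1u_1=0$; the hypothesis $q_1\neq 0$ in $\Omega$ then gives $u_1=0$ on this open piece and a final appeal to strong uniqueness closes the argument. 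Since $\partial D_j$ has empty interior for open $D_j$, the three resulting identities force $D_1=D_2=:D$.

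For Part~2, applying Part~1 to the multi--measurement hypothesis already delivers $D_1=D_2=D$. The mixed interior conditions on this common $D$ combined with $u_1\equiv u_2$ imply both $u_1=0$ and $\mathscr{L}_A^s u_1=0$ in $D$, so strong uniqueness would force $u_1\equiv 0$ whenever $D\neq\emptyset$, contradicting the existence of any test function $g\not\equiv 0$ in $\mathcal{O}_1$. The only compatible case is therefore $D=\emptyset$, in which the setting collapses to the obstacle-free anisotropic fractional Schr\"odinger inverse problem; the equality $q_1=q_2$ in $\Omega$ then follows from the Alessandrini--type integral identity driven by the Runge approximation property for $\mathscr{L}_A^s+q_j$, exactly as in Part~2 of Theorems~\ref{Main Theorem 1} and~\ref{Main Theorem 2}.

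The main obstacle I expect is the use of $q_1\neq 0$ in the subregion $D_2\setminus\overline{D_1}$: without it, the soft-side unknown $u_1$ could not be forced to vanish there, and the contradiction chain would break down. This is precisely why the mixed statement (unlike Theorem~\ref{Main Theorem 1}) must inherit the nonvanishing assumption on $q_j$ from Theorem~\ref{Main Theorem 2}. A secondary technical point is that $D_1$ and $D_2$ are assumed only open; the argument survives because every open set in $\mathbb{R}^n$ has topological boundary of empty interior, so the three emptiness conclusions above combine rigorously into $D_1=D_2$ without any further regularity hypothesis on the obstacles.
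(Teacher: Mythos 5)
Your argument is correct, and it in fact yields a sharper statement than the one the paper isolates. The paper proves part~1 by pointing to Theorem~\ref{thm:uniqueness of D}, whose structure is: establish $u_1\equiv u_2$, then argue by contradiction by choosing (WLOG) a single nonempty open $M\Subset D_2\setminus\overline{D_1}$ and running the two boundary-condition cases there. For part~2 it then invokes Theorem~\ref{thm:uniqueness of q} directly, observing in a remark after Lemma~\ref{lemma:approxiamtion} that the interior condition $\mathcal{B}u=0$ in $D$ does not affect the Runge approximation.

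What you do differently: you decompose $D_1\triangle D_2$ plus $D_1\cap D_2$ into the three disjoint open pieces and kill all three, noting crucially that the \emph{mixed} boundary conditions rule out $D_1\cap D_2$ as well, because $u_1=0$ (soft on $D_1$) and $\mathscr{L}_A^s u_2=\mathscr{L}_A^s u_1=0$ (hard on $D_2$) combine with Proposition~\ref{Prop Strong unique} to give $u_1\equiv 0$. This gives the genuinely stronger conclusion $D_1=D_2=\emptyset$ in the mixed case — something the paper never says explicitly, since Theorem~\ref{thm:uniqueness of D} is stated so that it covers the homogeneous cases of Theorems~\ref{Main Theorem 1} and~\ref{Main Theorem 2} simultaneously. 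Your three-set decomposition is also slightly more robust than the paper's WLOG: the paper tacitly assumes that $D_1\neq D_2$ forces one of $D_2\setminus\overline{D_1}$ or $D_1\setminus\overline{D_2}$ to have nonempty interior, which can fail for pathological open sets (e.g. $D_1=(0,1)\cup(1,2)$, $D_2=(0,2)$), whereas your inclusion of the $D_1\cap D_2$ piece closes that loophole. Your part~2 is then a clean corollary: once $D=\emptyset$, the problem reduces to the obstacle-free anisotropic fractional Calder\'on problem, and the integral identity of Lemma~\ref{prop:Integral identity} together with the Runge density of Lemma~\ref{lemma:approxiamtion} gives $q_1=q_2$ on all of $\Omega$, which is what the paper also does (with $D$ formally retained). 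You have correctly identified that the hypothesis $q_j\neq 0$ is needed only on the $D_2\setminus\overline{D_1}$ piece, which is exactly why Theorem~\ref{Main Theorem 3} inherits that assumption from the hard-obstacle Theorem~\ref{Main Theorem 2} and not from Theorem~\ref{Main Theorem 1}.
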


\subsection{Discussion and historical remarks }
The study of nonlocal inverse problems has received significant attention in the literature in recent years. The Calder\'{o}n
problem for the fractional Schr\"{o}dinger equation was first solved by
Ghosh, Salo and Uhlmann \cite{ghosh2016calder}. Based on the similar
idea, \cite{ghosh2017calderon} and \cite{lai2017global} generalized
the results to the nonlocal variable case and nonlocal semilinear
case, respectively. Note that the global uniqueness results hold for
these nonlocal cases for any space dimension $n\geq2$. The proof
of the Calder\'{o}n problem strongly relies on the \textit{strong uniqueness
	property}, and we refer readers to \cite[Theorem 1.2]{ghosh2016calder}
for the fractional Laplace $(-\Delta)^{s}$ and \cite[Theorem 1.2]{ghosh2017calderon}
for the nonlocal variable operator $\mathscr{L}_A^{s}$. The strong uniqueness
means that: for $s\in(0,1)$, $u\in H^s(\mathbb R^n)$, if $u=\mathscr{L}_A^{s}u=0$ in an arbitrary
open set in $\mathbb{R}^{n}$, then $u\equiv0$ in $\mathbb{R}^{n}$
for any $n\geq2$. Based on the strong uniqueness property, one can
obtain the nonlocal Runge approximation property, which states that any
$L^{2}$ function can be approximated by a sequence of the solutions of
$(\mathscr{L}_A^{s}+q)u=0$.

Recently, R\"uland and Salo \cite{ruland2017fractional} studied
the fractional Calder\'{o}n problem under lower regularity conditions
and established the stability results for the determination of unknown potentials. They \cite{ruland2017exponential} proved the optimal logarithmic stability
for the corresponding inverse problem associated with the fractional Schr\"{o}dinger equation. In \cite{harrach2017monotonicity},
the authors characterized an if-and-only-if relationship between two positive
potentials and their associated DtN maps of the fractional Schr\"{o}dinger
equation. Harrach and Lin \cite{harrach2017monotonicity} also provided
a reconstruction algorithm of an unknown inclusion based on the monotonicity
method. The nonlocal inverse problems reveal some novel and distinctive features compared to their local counterparts. For the current study of simultaneously recovering unknown potentials with possibly embedded impenetrable obstacles, we next also provide some interesting discussions and observations compared to its local counterpart.

When $s=1$, \eqref{Fractional Schrodinger equation} becomes a local problem and in such a case, one should replace the nonlocal condition $\mathcal{B} u=0$ in $D$ by $\widetilde{\mathcal{B}}(u)=0$ on $\partial D$, where $\widetilde{\mathcal{B}}u=u$ if $D$ is a soft obstacle and $\widetilde{\mathcal{B}}u=\nu^T\cdot A\cdot\nabla u$ if $D$ is a hard obstacle, with $\nu$ signifying the exterior unit normal vector to $\partial D$. The corresponding local DtN map can be readily defined on $\partial D$, which we still denote by $\Lambda_{D, q}$. The local inverse problem of determining $D$ by $\Lambda_{D, q}$ is usually referred to as the {\it obstacle problem}. The obstacle problem by a single measurement, namely determining $D$ by a single pair of Cauchy data $(\psi, \Lambda_{D, q}\psi)$ is a well-known and longstanding problem in the inverse scattering theory, which is also known as the Schiffer's problem, particularly for the case $A=I$ and $q=1$ \cite{CK,Isa2,LZsurvey}. There has been extensive study in the literature and significant progress has been achieved in recent years on the Schiffer's problem for the case with general polyhedral obstacles; see \cite{AR,CY,Liu1,Liu2} and \cite{LMRX,Ron1,Ron2} and the references therein, respectively, for related uniqueness and stability studies. Under the restrictive assumption that $\partial D$ is everywhere non-analytic, the Schiffer's problem was solved in \cite{HNS}. However, for the case with general obstacles, the Schiffer's problem still remains open in the literature. According to Theorems 1.1--1.3, the nonlocal Schiffer's problem has been completely solved in our study. Hence, it would be much interesting to study the connection of the nonlocal and local Schiffer's problems. This might be partly seen by taking the limit $s\rightarrow 1^{-}$. The simultaneous recovery of an embedded obstacle and an unknown surrounding potential is also a longstanding problem in the literature and closely related to the so-called partial data Calder\'on problem \cite{CSU,IUY}. The existing unique recovery results were established based on knowing the embedded obstacle to recover the unknown potential \cite{IUY}, or knowing the surrounding potential to recover the embedded obstacle \cite{KL,KP,LiuZhang,LiuZhaoZou,O}, or using multiple spectral data to recover both of them \cite{LiuLiu}. 

The rest of the paper is structured as follows. In Section \ref{Section 2},
we provide rigorous mathematical formulations of the nonlocal elliptic
operator $\mathscr{L}_A^{s}$ and fractional Sobolev spaces. In Section
\ref{Section 3}, we study the well-posedness and the associated
DtN map for $\mathscr{L}_A^{s}+q$ with an embedded obstacle.
In Section \ref{Section 4}, we prove the uniqueness in determining the obstacle $D$ in $\Omega$ by using a single exterior measurement. In Section \ref{Section 5}, we prove the global uniqueness in recovering the surrounding potential $q$. Combine with Section \ref{Section 4} and \ref{Section 5}, then we prove Theorem \ref{Main Theorem 1}--\ref{Main Theorem 3}.

\section{Preliminary knowledge on $\mathscr{L}_A^{s}$ \label{Section 2}}

In this section, we present some preliminary knowledge on the nonlocal PDO $\mathscr{L}_A^{s}$ that shall be needed in our inverse problem study. We begin with the definition of fractional Sobolev spaces.

\subsection{Fractional Sobolev spaces}

For $0<s<1$, the fractional Sobolev space is denoted by $H^{s}(\mathbb{R}^{n}):=W^{s,2}(\mathbb{R}^{n})$,
which is the standard $L^{2}$ based Sobolev space with the norm 
\begin{equation}\notag
\|u\|_{H^{s}(\mathbb{R}^{n})}^{2}=\|u\|_{L^{2}(\mathbb{R}^{n})}^{2}+\|(-\Delta)^{s/2}u\|_{L^{2}(\mathbb{R}^{n})}^{2}.\label{eq:H^s norm}
\end{equation}
The semi-norm $\|(-\Delta)^{s/2}u\|_{L^{2}(\mathbb{R}^{n})}^{2}$
can also be expressed as 
\[
\|(-\Delta)^{s/2}u\|_{L^{2}(\mathbb{R}^{n})}^{2}=\left((-\Delta)^{s}u,u\right)_{\mathbb{R}^{n}},
\]
where 
\begin{equation}
\notag(-\Delta)^su = c_{n,s}\, \text{P.V.}\int_{\mathbb{R}^n}\frac{u(x)-u(y)}{|x-y|^{n+2s}}\, dy\end{equation}
is the standard fractional Laplacian with the constant  \begin{equation}\notag c_{n,s}= \frac{\Gamma(\frac{n}{2}+s)}{|\Gamma(-s)|}\,\frac{4^s}{\pi^{n/2}}\end{equation}
and $\text{P.V.}$ denotes the standard principal value operator (see \cite{di2012hitchhiks} for detailed description).

Next, given any open set $U$ of $\mathbb{R}^{n}$ and $\eta\in\mathbb{R}$,
we consider the following Sobolev spaces, 
\begin{align*}
H^{\eta}(U) & :=\{u|_{U};\,u\in H^{\eta}(\mathbb{R}^{n})\},\\
\widetilde{H}^{\eta}(U) & :=\text{closure of \ensuremath{C_{c}^{\infty}(U)} in \ensuremath{H^{\eta}(\mathbb{R}^{n})}},\\
H_{0}^{\eta}(U) & :=\text{closure of \ensuremath{C_{c}^{\infty}(U)} in \ensuremath{H^{\eta}(U)}},
\end{align*}
and 
\[
H_{\overline{U}}^{\eta}:=\{u\in H^{\eta}(\mathbb{R}^{n});\,\mathrm{supp}(u)\subset\overline{U}\}.
\]
The Sobolev space $H^{\eta}(U)$ is complete under the graph norm
\[
\|u\|_{H^{\eta}(U)}:=\inf\left\{ \|v\|_{H^{\eta}(\mathbb{R}^{n})};v\in H^{\eta}(\mathbb{R}^{n})\mbox{ and }v|_{U}=u\right\} .
\]
It is known that $\widetilde{H}^{\eta}(U)\subseteq H_{0}^{\eta}(U)$,
and $H_{\overline{U}}^{\eta}$ is a closed subspace of $H^{\eta}(\mathbb{R}^{n})$.
For more detailed discussion of the fractional Sobolev spaces, we
refer to \cite{di2012hitchhiks,mclean2000strongly}.

\subsection{Definition of $\mathscr{L}_A^{s}$}

Let us get into the rigorous mathematical formulation of the problem we study here.
Let us begin with the definition of the nonlocal operator $\mathscr{L}_A^{s}$,
$s\in(0,1)$ via the spectral characterization of $\mathscr{L}_A$.
Suppose that $\mathscr{L}_A$ is a linear second order self-adjoint
elliptic operator, which is densely defined on $L^{2}(\mathbb{R}^{n})$
for $n\geq2$. There is a unique resolution $E$ of the identity,
supported on the spectrum of $\mathscr{L}_A$ which is a subset of $[0,\infty)$,
such that 
\[
\mathscr{L}_A=\int_{0}^{\infty}\lambda dE(\lambda)
\]
i.e., 
\[
\langle\mathscr{L}_Af,g\rangle_{L^{2}(\mathbb{R}^{n})}=\int_{0}^{\infty}\lambda dE_{f,g}(\lambda),\ f\in\mbox{Dom}(\mathscr{L}_A),g\in L^{2}(\mathbb{R}^{n}),
\]
where $dE_{f,g}(\lambda)$ is a regular Borel complex measure of bounded
variation concentrated on the spectrum of $\mathscr{L}_A$, such that $d|E_{f,g}|(0,\infty)\leq\|f\|_{L^{2}(\mathbb{R}^{n})}\|g\|_{L^{2}(\mathbb{R}^{n})}$.

If $\phi(\lambda)$ is a real measurable function defined on $[0,\infty)$,
then the operator $\phi(\mathscr{L}_A)$ is given formally by 
\[
\phi(\mathscr{L}_A)=\int_{0}^{\infty}\phi(\lambda)dE(\lambda).
\]
That is, $\phi(\mathscr{L}_A)$ is the operator with domain 
\[
\mbox{Dom}(\phi(\mathscr{L}_A))=\left\{ f\in L^{2}(\mathbb{R}^{n}):\int_{0}^{\infty}\|\phi(\lambda)\|^{2}dE_{f,f}(\lambda)<\infty\right\} ,
\]
defined by 
\[
\left\langle \phi(\mathscr{L}_A)f,g\right\rangle _{L^{2}(\mathbb{R}^{n})}=\left\langle \int_{0}^{\infty}\phi(\lambda)dE(\lambda)f,g\right\rangle _{L^{2}(\mathbb{R}^{n})}=\int_{0}^{\infty}\phi(\lambda)dE_{f,g}(\lambda).
\]
Following that we define the nonlocal elliptic operators $\mathscr{L}_A^{s}$,
$s\in(0,1)$ with domain $\mbox{Dom}(\mathscr{L}_A^{s})\subset\mbox{Dom}(\mathscr{L}_A)$,
\begin{equation}\notag
\mathscr{L}_A^{s}=\int_{0}^{\infty}\lambda^{s}~dE(\lambda)=\frac{1}{\Gamma(-s)}\int_{0}^{\infty}\left(e^{-t\mathscr{L}_A}-\mbox{Id}\right)~\frac{dt}{t^{1+s}},\label{eq:1111}
\end{equation}
where $\Gamma(s)$ is the standard Gamma function and $\Gamma(-s)=\dfrac{\Gamma(1-s)}{-s}<0$
for $s\in(0,1)$. Here $e^{-t\mathscr{L}_A}$ $(t\geq0)$ is the heat-diffusion
semigroup generated by $\mathscr{L}_A$ with domain $L^{2}(\mathbb{R}^{n})$
\begin{equation}\notag
e^{-t\mathscr{L}_A}=\int_{0}^{\infty}e^{-t\lambda}~dE(\lambda),\label{eq:heat-semigroup}
\end{equation}
which enjoys the contraction property in $L^{2}(\mathbb{R}^{n})$
as $\|e^{-t\mathscr{L}_A}f\|\leq\|f\|_{L^{2}(\mathbb{R}^{n})}$. Meanwhile,
for $w\in H^{s}(\mathbb{R}^{n})$, we have 
\begin{equation}\notag
\mathscr{L}_A^{s}w=\frac{1}{\Gamma(-s)}\int_{0}^{\infty}\left(e^{-t\mathscr{L}_A}w(x)-w(x)\right)\dfrac{dt}{t^{1+s}}.\label{eq:heat representation for L^s}
\end{equation}
For more detailed discussions, we refer readers to \cite{pazy2012semigroups,rudin1991functional,stinga2010extension}.

In fact, the heat-diffusion semigroup admits a nonnegative symmetric heat kernel $\mathscr{W}_{t}(x,z)$, $t>0$, $x,z\in \Omega$ by integration, that is for any $f\in L^2(\mathbb{R}^n)$
\begin{equation*}
e^{-t\mathscr{L}_A}f(x)=\int_{\Omega}\mathscr{W}_{t}(x,z)f(\eta)d\eta(z)
\end{equation*}
and for any $v,w\in H^{s}(\mathbb{R}^{n})$, 
\[
(e^{-t\mathscr{L}_A}v,w)_{\mathbb{R}^{n}}=\int_{\mathbb{R}^{n}}\int_{\mathbb{R}^{n}}\mathscr{W}_{t}(x,z)v(z)w(x)dzdx=(v,e^{-t\mathscr{L}_A}w)_{\mathbb{R}^{n}},\mbox{ }t\geq0.
\]
Define 
\begin{equation}\notag
\mathscr{K}_{s}(x,z)=\dfrac{1}{2|\Gamma(-s)|}\int_{0,}^{\infty}\mathscr{W}_{t}(x,z)\dfrac{dt}{t^{1+s}},\label{eq:kernel}
\end{equation}
which gives the kernel of the heat semi-group $e^{-t\mathscr{L}_A}$
and utilizes \cite[Theorem 2.4]{caffarelli2016fractional}, 
\begin{equation}
(\mathscr{L}_A^{s}v,w)_{\mathbb{R}^{n}}=\int_{\mathbb{R}^{n}}\int_{\mathbb{R}^{n}}(v(x)-v(z))(w(x)-w(z))\mathscr{K}_{s}(x,z)dxdz,\label{eq:integral represent for nonlocal}
\end{equation}
where we used the fact that $A(x)$ is a bounded matrix-valued function
defined  in $\mathbb{R}^{n}$ satisfying \eqref{ellipticity and symmetry condition}. In addition, the kernel $\mathscr{K}_{s}$
possesses the following pointwise estimate (see \cite[Theorem 2.4]{caffarelli2016fractional}
again) 
\begin{equation}
\dfrac{c_{1}}{|x-z|^{n+2s}}\leq\mathscr{K}_{s}(x,z)\leq\dfrac{c_{2}}{|x-z|^{n+2s}},\label{eq:pointwise estimate for kernel K}
\end{equation}
for some constants $c_{1}$, $c_{2}>0$ depending on $A$,
$n$ and $s$ and $\mathscr{K}_{s}(x,z)=\mathscr{K}_{s}(z,x)$ for
all $x,z\in\mathbb{R}^{n}$. We also refer readers to \cite{ghosh2017calderon}
for further discussions of the nonlocal operator $\mathscr{L}_A^{s}$.

\section{Nonlocal problems with embedded obstacles and the surrounding potentials \label{Section 3}}

In this section, we give the mathematical formulations of our nonlocal
problems.

\subsection{Well-Posedness}

In the subsequent discussions, we always set $\Omega\subseteq\mathbb{R}^{n}$
to be a bounded open set and $D\Subset\Omega$ to be an open subset, $q$
to be a potential in $L^{\infty}(\Omega\backslash\overline{D})$ and
$s\in(0,1)$ to be a constant. Consider the nonlocal Dirichlet problem
\begin{equation}
\begin{cases}
(\mathscr{L}_A^{s}+q)u=f & \mbox{ in }\Omega\backslash\overline{D},\\
\mathcal Bu=0& \mbox{ in }D,\\
u=g & \mbox{ in }\Omega_{e}.
\end{cases}\label{eq:Nonlocal Dirichlet problem}
\end{equation}
Define the bilinear form $\mathbb{B}_{q}(\cdot,\cdot)$ by 
\begin{eqnarray}
\mathbb{B}_{q}(v,w): & = & \int_{\mathbb{R}^{n}}\int_{\mathbb{R}^{n}}(v(x)-v(z))(w(x)-w(z))\mathscr{K}_{s}(x,z)dxdz\notag\\
&  & +\int_{\Omega\backslash\overline{D}}q(x)v(x)w(x)\,dx,\label{Bilinera form}
\end{eqnarray}
for any $v,w\in H^{s}(\mathbb{R}^{n})$. Combining \eqref{eq:integral represent for nonlocal}
and \eqref{Bilinera form}, we have that 
\[
\mathbb{B}_{q}(v,w)=\int_{\mathbb{R}^{n}}(\mathscr{L}_A^{s}v)w\,dx+\int_{\Omega\backslash\overline{D}}qvw\:dx.
\]
Then by using the standard variational formula, the weak solution
can be defined by 
\begin{defn}
	(Weak solution) Let $\Omega$ be a bounded open set in $\mathbb{R}^{n}$.
	Given $f\in L^{2}(\Omega\backslash\overline{D})$ and $g\in H^{s}(\mathbb{R}^{n})$,
	we call that $u\in H^{s}(\mathbb{R}^{n})$ is a (weak) solution of
	\eqref{eq:Nonlocal Dirichlet problem} provided that $\widetilde{u}_{g}:=u-g\in\widetilde{H}^{s}(\Omega)$
	and 
	\begin{equation}
	\mathbb{B}_{q}(u,\phi)=\int_{\Omega\backslash\overline{D}}f\phi dx\quad\mbox{for any \ensuremath{\phi\in C_{0}^{\infty}(\Omega\backslash\overline{D})}},\label{eq:weak solution via bilinear}
	\end{equation}
	with $u-g\in\widetilde{H}^{s}(\Omega)$ or equivalently 
	\begin{equation}\notag
	\mathbb{B}_{q}(\widetilde{u}_{g},\phi)=\int_{\Omega\backslash\overline{D}}\left(f-(\mathscr{L}_A^{s}+q)g\right)\phi dx\quad\mbox{for any \ensuremath{\phi\in C_{0}^{\infty}(\Omega\backslash\overline{D})}}.\label{eq:weak solution via bilinear1}
	\end{equation}
	
\end{defn}
Next, we have the following well-posedness. 
\begin{lem}
	\label{lemma:equivalence} Let $q\in L^{\infty}(\Omega\backslash\overline{D})$
	and $f\in L^{2}(\Omega\backslash\overline{D})$. $u\in H^{s}(\mathbb{R}^{n})$
	solves 
	\[
	\mathscr{L}_A^{s}u+qu=f\quad\text{ in }\Omega\backslash\overline{D},
	\]
	(in the sense of distributions) if and only if $u\in H^{s}(\mathbb{R}^{n})$
	satisfies 
	\[
	\mathbb{B}_{q}(u,w)=\int_{\Omega\setminus\overline{D}}fw {dx}\quad\text{ for all }w\in\widetilde{H}^{s}(\Omega\backslash\overline{D}).
	\]
	Moreover, when $q$ satisfies the eigenvalue condition \eqref{eq:eigenvalue condition},
	we have the stability estimate 
	\begin{equation}
	\|u\|_{H^{s}(\mathbb{R}^{n})}\leq C\left(\|f\|_{L^{2}(\Omega\backslash\overline{D})}+\|g\|_{H^{s}(\mathbb{R}^{n})}\right),\label{eq:well posed estimate}
	\end{equation}
	where $C>0$ is a constant independent of $f$ and $g$.\end{lem}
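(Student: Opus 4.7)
\emph{Equivalence of distributional and weak formulations.} This part is essentially a direct consequence of the integral representation \eqref{eq:integral represent for nonlocal}. If $u\in H^s(\mathbb{R}^n)$ satisfies $\mathscr{L}_A^s u+qu=f$ in the sense of distributions on $\Omega\setminus\overline{D}$, then for any $\phi\in C_c^\infty(\Omega\setminus\overline{D})$ the pairing $\langle \mathscr{L}_A^s u,\phi\rangle$ can be rewritten via \eqref{eq:integral represent for nonlocal} as exactly the symmetric double integral appearing in the nonlocal part of $\mathbb{B}_q$, so that $\mathbb{B}_q(u,\phi)=\int_{\Omega\setminus\overline{D}}f\phi\,dx$. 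The bilinear form $\mathbb{B}_q$ is continuous on $H^s(\mathbb{R}^n)\times H^s(\mathbb{R}^n)$ thanks to the upper pointwise bound in \eqref{eq:pointwise estimate for kernel K} and $q\in L^\infty$, and $C_c^\infty(\Omega\setminus\overline{D})$ is dense in $\widetilde{H}^s(\Omega\setminus\overline{D})$, so the identity extends to all test functions $w\in\widetilde{H}^s(\Omega\setminus\overline{D})$. The converse follows by reversing the computation on $C_c^\infty$ test functions.

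\emph{Stability estimate.} To prove \eqref{eq:well posed estimate} the plan is to homogenize the exterior data and invoke the Fredholm alternative. Choose a lift $G\in H^s(\mathbb{R}^n)$ with $G=g$ in $\Omega_e$, $G\equiv 0$ in a neighborhood of $\overline{D}$, and $\|G\|_{H^s(\mathbb{R}^n)}\leq C\|g\|_{H^s(\mathbb{R}^n)}$; this is possible since $\Omega_e$ and $\overline{D}$ are disjoint. Then $\widetilde u:=u-G\in\widetilde{H}^s(\Omega\setminus\overline{D})$ satisfies
\[
\mathbb{B}_q(\widetilde u,\phi)=\int_{\Omega\setminus\overline{D}}f\phi\,dx-\mathbb{B}_q(G,\phi),\qquad \phi\in\widetilde{H}^s(\Omega\setminus\overline{D}),
\]
whose right-hand side is bounded by $C\big(\|f\|_{L^2(\Omega\setminus\overline{D})}+\|g\|_{H^s(\mathbb{R}^n)}\big)\|\phi\|_{H^s(\mathbb{R}^n)}$. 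The nonlocal part of $\mathbb{B}_q(v,v)$ dominates a multiple of the Gagliardo seminorm $[v]_{H^s(\mathbb{R}^n)}^2$ by the lower pointwise bound in \eqref{eq:pointwise estimate for kernel K}, and since elements of $\widetilde H^s(\Omega\setminus\overline{D})$ are compactly supported in $\overline{\Omega}$, the fractional Poincaré inequality upgrades this to full $H^s$-coercivity. The zeroth-order perturbation $v\mapsto qv$ factors through the Rellich compact embedding $\widetilde H^s(\Omega\setminus\overline{D})\hookrightarrow L^2$ and is therefore a compact perturbation, so the operator induced by $\mathbb{B}_q$ on $\widetilde H^s(\Omega\setminus\overline{D})$ is Fredholm of index zero.

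\emph{Finishing and main obstacle.} The eigenvalue condition \eqref{eq:eigenvalue condition} supplies injectivity of this operator; the Fredholm alternative then yields surjectivity and a bounded inverse, producing $\|\widetilde u\|_{H^s(\mathbb{R}^n)}\leq C(\|f\|_{L^2}+\|g\|_{H^s})$, after which adding back $G$ gives \eqref{eq:well posed estimate}. The main delicate point I expect is choosing the correct variational space to match the obstacle condition: for the soft case $\mathcal Bu=u=0$ in $D$ one works directly in $\widetilde H^s(\Omega\setminus\overline{D})$ as above, whereas for the hard case $\mathcal Bu=\mathscr{L}_A^s u=0$ in $D$ the constraint is genuinely nonlocal and must be encoded by enlarging the test class to $\{v\in H^s(\mathbb{R}^n):v|_{\Omega_e}=0\}$, so that the identity $\mathbb{B}_q(u,\phi)=\int f\phi$ for test functions $\phi$ supported in $D$ recovers $\mathscr{L}_A^s u=0$ on $D$. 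Once this bookkeeping is carried out, the coercivity-plus-compact-perturbation architecture transfers unchanged thanks to the sharp kernel bounds \eqref{eq:pointwise estimate for kernel K}, and the Fredholm argument closes the estimate in the standard way.
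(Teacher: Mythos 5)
Your proof of the equivalence part is the same as the paper's: rewrite $\int_{\Omega\setminus\overline{D}}(\mathscr{L}_A^s u + qu - f)\,w\,dx$ via the integral representation \eqref{eq:integral represent for nonlocal} and extend from $C_c^\infty(\Omega\setminus\overline{D})$ to $\widetilde{H}^s(\Omega\setminus\overline{D})$ by density and continuity.

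For the stability estimate, your route is more careful than what the paper writes out. The paper's stated argument asserts that $\mathbb{B}_q$ is coercive and then invokes Lax--Milgram directly; but for a general sign-changing $q\in L^\infty$ the form $\mathbb{B}_q$ need not be coercive, and if it were, the eigenvalue condition \eqref{eq:eigenvalue condition} would be redundant. The paper is deferring the real argument to \cite[Section 3]{ghosh2017calderon}, and that argument is precisely the one you give: a G\aa rding-type inequality from the lower kernel bound in \eqref{eq:pointwise estimate for kernel K} plus fractional Poincar\'e gives coercivity of the principal part on $\widetilde{H}^s(\Omega\setminus\overline{D})$, the zeroth-order term $v\mapsto qv$ is a compact perturbation via Rellich, the induced operator is Fredholm of index zero, and the eigenvalue condition supplies injectivity, hence a bounded inverse. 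Your homogenization via a lift $G$ supported away from $\overline{D}$ is also standard and unobjectionable. In short, you have written out the proof the paper intended but compressed into a citation; the only discrepancy is that the paper's word ``coercive'' should be read as ``satisfies a G\aa rding inequality.''

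One small point worth flagging in your own write-up: the lemma as stated only concerns the equation in $\Omega\setminus\overline{D}$ tested against $\widetilde{H}^s(\Omega\setminus\overline{D})$, so the soft/hard distinction does not actually enter the lemma. Your closing remark about enlarging the test class to $\{v\in H^s(\mathbb{R}^n): v|_{\Omega_e}=0\}$ to encode the hard condition $\mathscr{L}_A^s u=0$ in $D$ is correct and is the right way to set up well-posedness for the full boundary value problem \eqref{eq:Nonlocal Dirichlet problem} in the hard case, but it is an addition beyond what Lemma \ref{lemma:equivalence} itself claims; the paper glosses over this bookkeeping entirely.
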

\begin{proof}
	A straightforward computation shows that 
	\begin{align*}
	& \int_{\Omega\backslash\overline{D}}\left(\mathscr{L}_A^{s}u+qu-f\right)wdx\\
	= & \int_{\mathbb{R}^{n}}\int_{\mathbb{R}^{n}}(u(x)-u(z))(w(x)-w(z))\mathscr{K}_{s}(x,z)dxdz\\
	&+\int_{\Omega\backslash\overline{D}}quwdx-\int_{\Omega\backslash\overline{D}}fwdx
	\end{align*}
	for all $w\in C_{c}^{\infty}(\Omega\backslash\overline{D})$. It is
	easy to see that the bilinear form $\mathbb{B}_{q}(\cdot,\cdot)$
	is bounded, coercive and continuous by using the pointwise estimate
	\eqref{eq:pointwise estimate for kernel K} of the kernel $\mathscr{K}_{s}(x,z)$,
	then the stability estimate \eqref{eq:well posed estimate} follows
	from the standard Lax-Milgram theorem (a similar proof has been addressed
	in \cite[Section 3]{ghosh2017calderon}). This completes the proof.\end{proof}
\begin{lem}
	\label{rem:uIndependentg} The solution $u\in H^{s}(\mathbb{R}^{n})$
	of \eqref{eq:Nonlocal Dirichlet problem} is independent of the value
	of $g\in H^{s}(\mathbb{R}^{n})$ in $\Omega$, and it only relies on $g|_{\Omega_{e}}$.\end{lem}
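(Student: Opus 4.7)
The plan is to reduce the statement to the uniqueness ingredient that is already built into the well-posedness setup, namely the eigenvalue condition \eqref{eq:eigenvalue condition} that is standing throughout. Fix two exterior data $g_1, g_2\in H^{s}(\mathbb{R}^{n})$ with $g_1|_{\Omega_{e}}=g_2|_{\Omega_{e}}$, and let $u_{1}, u_{2}\in H^{s}(\mathbb{R}^{n})$ denote the corresponding weak solutions of \eqref{eq:Nonlocal Dirichlet problem}. Setting $w:=u_{1}-u_{2}\in H^{s}(\mathbb{R}^{n})$, the claim is equivalent to $w\equiv 0$ in $\mathbb{R}^{n}$, from which it follows that the solution is determined only by $g|_{\Omega_{e}}$.

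First I would handle the exterior behavior. By definition of a weak solution, $u_{j}-g_{j}\in\widetilde{H}^{s}(\Omega)$, so $u_{j}=g_{j}$ a.e.\ in $\Omega_{e}$ for $j=1,2$. Since $g_{1}=g_{2}$ on $\Omega_{e}$ by hypothesis, this gives $w=0$ in $\Omega_{e}$. Next I would pass from the bilinear-form identities to a distributional PDE for $w$: subtracting $\mathbb{B}_{q}(u_{1},\phi)=\int f\phi$ and $\mathbb{B}_{q}(u_{2},\phi)=\int f\phi$ for $\phi\in C_{c}^{\infty}(\Omega\backslash\overline{D})$, bilinearity gives $\mathbb{B}_{q}(w,\phi)=0$, and Lemma \ref{lemma:equivalence} immediately upgrades this to $(\mathscr{L}_{A}^{s}+q)w=0$ in $\Omega\backslash\overline{D}$ in the distributional sense.

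It remains to verify the condition on $D$. This is where one checks the two cases separately, but in each the argument is one line of linearity: if $D$ is a soft obstacle then $\mathcal{B}u_{j}=u_{j}=0$ in $D$ gives $w=0$ in $D$, while if $D$ is hard then $\mathcal{B}u_{j}=\mathscr{L}_{A}^{s}u_{j}=0$ in $D$ gives $\mathscr{L}_{A}^{s}w=0$ in $D$; in both cases $\mathcal{B}w=0$ in $D$. Therefore $w$ simultaneously satisfies all three hypotheses in \eqref{eq:eigenvalue condition}, and the standing eigenvalue assumption forces $w\equiv 0$ on $\mathbb{R}^{n}$, i.e.\ $u_{1}=u_{2}$.

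I do not anticipate any real obstacle, since the statement is essentially a uniqueness corollary: the only nontrivial points are (i) remembering that $u_{j}-g_{j}\in\widetilde{H}^{s}(\Omega)$ already pins the exterior trace of $u_{j}$ to $g_{j}|_{\Omega_{e}}$, and (ii) noting that the values of $g_{j}$ inside $\Omega$ cancel out upon taking the difference so that the inhomogeneous terms $\mathbb{B}_{q}(g_{j},\phi)$ (which do depend nonlocally on $g_{j}$ everywhere) never need to be compared — we subtract at the level of $u_{j}$, not of $\widetilde{u}_{g_{j}}=u_{j}-g_{j}$. Once one has this observation, the argument is clean.
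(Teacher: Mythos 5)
Your argument is correct and follows the same standard route the paper invokes by citing \cite[Proposition 3.4]{ghosh2017calderon}: subtract two solutions with the same exterior data, observe that the difference $w$ lies in $\widetilde{H}^s(\Omega)$, solves the homogeneous problem in $\Omega\setminus\overline{D}$ with $\mathcal{B}w=0$ in $D$ (both by linearity), and then conclude $w\equiv 0$ from the standing eigenvalue condition \eqref{eq:eigenvalue condition}. Your parenthetical remark that one should subtract at the level of $u_j$ rather than $\widetilde{u}_{g_j}$ — so that the nonlocal terms $\mathbb{B}_q(g_j,\cdot)$, which genuinely depend on $g_j|_\Omega$, never need to be compared — is exactly the small observation that makes the argument clean, and it is worth keeping.
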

\begin{proof}
	The proof is similar to that of \cite[Proposition 3.4]{ghosh2017calderon}
	and we skip it. 
\end{proof}
Via Lemma \ref{rem:uIndependentg}, we can consider the nonlocal problem
\eqref{Fractional Schrodinger equation} with Dirichlet data in an
abstract quotient space 
\begin{equation}
\mathbb{H}:=H^{s}(\mathbb{R}^{n})/\widetilde{H}^{s}(\Omega).\label{eq:Xquotient}
\end{equation}
We also refer readers to \cite{ghosh2017calderon,ghosh2016calder}
for more detailed discussions. Since the solution $u\in H^{s}(\mathbb{R}^{n})$
of \eqref{eq:Nonlocal Dirichlet problem} only depends on the exterior
value, in order to simplify notations, we shall consider the Dirichlet data
$g$ in the quotient space $\mathbb{H}$ in the subsequent study.

\subsection{The DtN map}

We define the associated DtN map for $\mathscr{L}_A^{s}+q$ via
the bilinear form $\mathbb{B}_{q}$ in \eqref{eq:weak solution via bilinear}. 
\begin{prop}
	\label{prop:DNmap} (DtN map) For $n\geq2$, let $\Omega\subset\mathbb{R}^{n}$
	be a bounded open set and $D\Subset\Omega$ be an obstacle. Let $0<s<1$
	and $q\in L^{\infty}(\Omega\backslash\overline{D})$ satisfy \eqref{eq:eigenvalue condition}.
	Let $\mathbb{H}$ be the abstract space given in \eqref{eq:Xquotient}.
	Define 
	\begin{equation}\notag
	\left\langle \Lambda_{D,q}g,h\right\rangle _{\mathbb{H}^{*}\times\mathbb{H}}:=\mathbb{B}_{q}(u_{g},h),\quad g,h\in\mathbb{H},\label{eq:equvalent integration by parts}
	\end{equation}
	where $u_{g}\in H^{s}(\mathbb{R}^{n})$ is the solution of \eqref{Fractional Schrodinger equation}
	with the exterior Dirichlet data $g$. Then $\Lambda_{D,q}:\mathbb{H}\to\mathbb{H}^{*}$
	is a bounded linear map. Moreover, we have the following symmetry
	property for $\Lambda_{D,q}$, 
	\begin{equation}\notag
	\left\langle \Lambda_{D,q}g,h\right\rangle _{\mathbb{H}^{*}\times\mathbb{H}}=\left\langle \Lambda_{D,q}h,g\right\rangle _{\mathbb{H}^{*}\times\mathbb{H}},\quad g,h\in\mathbb{H}.\label{eq:adjoint operator}
	\end{equation}
\end{prop}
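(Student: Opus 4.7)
The plan is to verify the three assertions in Proposition \ref{prop:DNmap} -- well-definedness on the quotient space $\mathbb{H}$, boundedness of $\Lambda_{D,q}$, and its symmetry -- by reducing each to properties of the bilinear form $\mathbb{B}_q$ and the nonlocal Dirichlet problem that were already established. First I would establish well-definedness of the pairing $\langle \Lambda_{D,q} g, h\rangle := \mathbb{B}_q(u_g, h)$. For the $g$-slot, Lemma \ref{rem:uIndependentg} implies that $u_g$ depends only on $g|_{\Omega_e}$, so replacing $g$ by an element of the same class in $\mathbb{H}$ leaves $u_g$, and hence the pairing, unchanged. For the $h$-slot, I need to show that $\mathbb{B}_q(u_g, \phi) = 0$ for all $\phi \in \widetilde H^s(\Omega)$. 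For $\phi \in \widetilde H^s(\Omega \setminus \overline D)$ this is immediate from the weak formulation in Lemma \ref{lemma:equivalence}; for the remaining freedom of test functions supported in $\overline D$, one invokes the obstacle condition $\mathcal B u_g = 0$ in $D$ -- either $u_g|_D = 0$ in the soft case, or $\mathscr{L}_A^s u_g = 0$ in $D$ in the hard case -- together with the integral representation \eqref{eq:integral represent for nonlocal}.

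For boundedness and linearity, I would use that $\mathbb{B}_q$ is a bounded symmetric bilinear form on $H^s(\mathbb{R}^n) \times H^s(\mathbb{R}^n)$, which follows from the pointwise kernel estimate \eqref{eq:pointwise estimate for kernel K} and the hypothesis $q \in L^\infty$. Combined with the stability estimate \eqref{eq:well posed estimate}, which gives $\|u_g\|_{H^s(\mathbb{R}^n)} \le C \|g\|_{H^s(\mathbb{R}^n)}$, one obtains $|\mathbb{B}_q(u_g, h)| \le C \|g\|_{H^s} \|h\|_{H^s}$; taking the infimum over representatives of $g$ and $h$ yields the analogous bound in $\mathbb{H} \times \mathbb{H}$, proving $\Lambda_{D,q}\in \mathcal{L}(\mathbb{H},\mathbb{H}^*)$. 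Linearity in $g$ follows from linearity of the solution map $g \mapsto u_g$, which is a consequence of the eigenvalue condition and the Lax--Milgram argument already invoked in Lemma \ref{lemma:equivalence}.

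Finally, for symmetry, let $u_g, u_h$ be the solutions corresponding to exterior data $g, h$. Since $u_h - h \in \widetilde H^s(\Omega)$ by the definition of weak solution, Step 1 gives $\mathbb{B}_q(u_g, h) = \mathbb{B}_q(u_g, u_h)$; by the manifest symmetry of both the integrand in \eqref{eq:integral represent for nonlocal} and the potential term in \eqref{Bilinera form}, this equals $\mathbb{B}_q(u_h, u_g)$, and the same argument in reverse yields $\mathbb{B}_q(u_h, u_g) = \mathbb{B}_q(u_h, g) = \langle \Lambda_{D,q} h, g\rangle$. The main technical obstacle will be the verification in Step 1 that the weak identity $\mathbb{B}_q(u_g, \phi) = 0$ extends from $\phi \in \widetilde H^s(\Omega \setminus \overline D)$ to all of $\widetilde H^s(\Omega)$: in the soft case one must show that the nonlocal cross-terms between $D$ and its complement cancel once $u_g|_D = 0$ is imposed, while in the hard case one exploits $\mathscr{L}_A^s u_g = 0$ in $D$ in the distributional pairing against $\phi|_D$. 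Both cases rely crucially on the symmetric kernel representation \eqref{eq:integral represent for nonlocal}, which justifies moving the operator onto the smooth factor in the pairing.
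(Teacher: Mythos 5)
Your three-step structure (well-definedness via Lemma~\ref{rem:uIndependentg} and the weak formulation, boundedness via the kernel bound \eqref{eq:pointwise estimate for kernel K} and the stability estimate \eqref{eq:well posed estimate}, symmetry via the manifest symmetry of $\mathbb{B}_q$) is the natural way to flesh out what the paper outsources to \cite[Proposition~3.5]{ghosh2017calderon}, and the boundedness and symmetry steps are sound. However, the claim that anchors your Step~1 in the soft case --- that the nonlocal cross-terms between $D$ and $D^c$ cancel once $u_g|_D=0$ is imposed --- is false, and the argument breaks at exactly the point you flagged as the main technical obstacle.

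Concretely, take $\phi\in\widetilde H^s(\Omega)$ supported in $D$ and $u_g|_D=0$. Splitting the double integral in \eqref{Bilinera form} into the four regions $x,z\in D$; $x\in D,z\notin D$; $x\notin D,z\in D$; $x,z\notin D$, the first and fourth vanish, while the two cross terms give
\[
-\int_D\int_{D^c}u_g(z)\phi(x)\mathscr{K}_s(x,z)\,dz\,dx \;-\;\int_{D^c}\int_D u_g(x)\phi(z)\mathscr{K}_s(x,z)\,dz\,dx \;=\; -2\int_D\int_{D^c}u_g(z)\phi(x)\mathscr{K}_s(x,z)\,dz\,dx,
\]
so the two contributions have the same sign and reinforce rather than cancel. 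Equivalently, by \eqref{eq:integral represent for nonlocal} one has $\mathbb{B}_q(u_g,\phi)=\int_D\phi\,\mathscr{L}_A^s u_g\,dx$, and $u_g=0$ in $D$ does not make $\mathscr{L}_A^s u_g$ vanish in $D$ because $\mathscr{L}_A^s$ is nonlocal --- in fact Proposition~\ref{Prop Strong unique} tells you that having both $u_g=0$ and $\mathscr{L}_A^s u_g=0$ on $D$ would force $u_g\equiv 0$. Hence $\mathbb{B}_q(u_g,\cdot)$ does \emph{not} vanish on $\widetilde H^s(\Omega)$ in the soft case, and $\mathbb{B}_q(u_g,\widehat h)$ genuinely depends on the representative $\widehat h$ through its values in $D$.

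The way to repair this, which is almost certainly what the paper and \cite{ghosh2017calderon} have in mind, is to interpret the definition as $\langle\Lambda_{D,q}g,h\rangle:=\mathbb{B}_q(u_g,u_h)$, i.e.\ to fix the canonical representative $u_h$ (the solution with exterior data $h$) in the second slot, rather than an arbitrary one. Then the extra term $\int_D u_h\,\mathscr{L}_A^s u_g\,dx$ dies in the soft case because $u_h|_D=0$, and in the hard case because $\mathscr{L}_A^s u_g|_D=0$, so in both cases $\mathbb{B}_q(u_g,u_h)=\int_{\Omega_e}h\,\mathscr{L}_A^s u_g\,dx$ depends only on the classes $[g],[h]\in\mathbb{H}$. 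Symmetry is then immediate from $\mathbb{B}_q(u_g,u_h)=\mathbb{B}_q(u_h,u_g)$ without the back-and-forth reduction you used, and boundedness follows by applying \eqref{eq:well posed estimate} to both $u_g$ and $u_h$ and taking infima over representatives. With that one modification the proof goes through; as written, your Step~1 overstates what the soft boundary condition delivers.
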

\begin{proof}
	Combining with Lemma \ref{rem:uIndependentg}, the proof is similar
	to \cite[Proposition 3.5]{ghosh2017calderon}, so we skip it here.\end{proof}
\begin{rem}
	For any $\widehat{h}\in H^{s}(\mathbb{R}^{n})$, a direct calculation
	shows that 
	\begin{align}
	(\Lambda_{D,q}g,h)_{\mathbb{H}^{*}\times\mathbb{H}} & =\mathbb{B}_{q}(u_{g},\widehat{h})\nonumber \\
	& =\int_{\mathbb{R}^{n}}\widehat{h}(\mathscr{L}_A^{s}u_{g})dx+\int_{\Omega}qu_{g}\widehat{h}dx\nonumber \\
	& =\int_{\Omega_{e}}\widehat{h}(\mathscr{L}_A^{s}u_{g})dx\nonumber \\
	& =\int_{\Omega_{e}}h(\mathscr{L}_A^{s}u_{g})\,dx.\label{t15}
	\end{align}
	Then from \eqref{t15}, we have 
	\[
	(\Lambda_{q}g,h)_{\mathbb{H}^{*}\times\mathbb{H}}=\int_{\Omega_{e}}h(\mathscr{L}_A^{s}u_{g})\,dx,\mbox{ for any }h\in\mathbb{H},
	\]
	which implies that 
	\begin{equation}
	\Lambda_{q}g=\left.\mathscr{L}_A^{s}u_{g}\right|_{\Omega_{e}}.\notag
	\end{equation}
	
\end{rem}
The integral identity allows us to solve the nonlocal type inverse
problem as a direct consequence of Proposition \ref{prop:DNmap}.
It can be stated as follows.
\begin{lem}
	\label{prop:Integral identity} (Integral identity) For $n\geq2$,
	let $\Omega\subset\mathbb{R}^{n}$ be a bounded open set and $D\Subset\Omega$
	be a obstacle. Let $s\in(0,1)$ and $q\in L^{\infty}(\Omega\backslash\overline{D})$
	satisfy \eqref{eq:eigenvalue condition}. For any $g_{1},g_{2}\in\mathbb{H}$,
	one has 
	\begin{equation}\notag
	\int_{\Omega_e}(\Lambda_{D,q_{1}}g_{1}-\Lambda_{D,q_{2}}g_{1})g_{2}\,dx=\int_{\mathbb{R}^{n}}(q_{1}-q_{2})r_{\Omega\backslash\overline{D}}u_{1}r_{\Omega\backslash\overline{D}}u_{2}\,dx
	\end{equation}
	where $u_{j}\in H^{s}(\mathbb{R}^{n})$ solves $(\mathscr{L}_A^{s}+q_{j})u_{j}=0$
	in $\Omega\backslash\overline{D}$ with $\left.u_{j}\right|_{\Omega_{e}}=g_{j}$
	for $j=1,2$, and $r_{\Omega\backslash\overline{D}}u$ refers to the
	restriction of $u$ to $\Omega\backslash\overline{D}$.\end{lem}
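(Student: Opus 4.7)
The plan is to derive the integral identity by exploiting the bilinear-form representation of the DtN map from Proposition~\ref{prop:DNmap} together with the symmetry of the kernel $\mathscr{K}_s(x,z)$, which makes $\mathbb{B}_q(\cdot,\cdot)$ a symmetric bilinear form. The key observation is that the nonlocal part of $\mathbb{B}_q$ does not depend on $q$, so the two bilinear forms $\mathbb{B}_{q_1}$ and $\mathbb{B}_{q_2}$ differ only in the potential term.

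First, I would rewrite the left-hand side using the pairing representation: by the remark following Proposition~\ref{prop:DNmap}, for any $g_2 \in \mathbb{H}$ we have
\[
\int_{\Omega_e}(\Lambda_{D,q_1}g_1 - \Lambda_{D,q_2}g_1)g_2\,dx = \langle \Lambda_{D,q_1}g_1, g_2\rangle_{\mathbb{H}^*\times\mathbb{H}} - \langle \Lambda_{D,q_2}g_1, g_2\rangle_{\mathbb{H}^*\times\mathbb{H}}.
\]
Then I would choose the representative $u_2 \in H^s(\mathbb{R}^n)$ of the equivalence class $g_2 \in \mathbb{H}$, which is legitimate because $u_2 - g_2 \in \widetilde{H}^s(\Omega)$ by the definition of a weak solution. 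By the defining formula of the DtN map, this gives
\[
\langle \Lambda_{D,q_1}g_1, g_2\rangle_{\mathbb{H}^*\times\mathbb{H}} = \mathbb{B}_{q_1}(u_1, u_2).
\]

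Second, I would apply the symmetry of the DtN map (from Proposition~\ref{prop:DNmap}) to rewrite
\[
\langle \Lambda_{D,q_2}g_1, g_2\rangle_{\mathbb{H}^*\times\mathbb{H}} = \langle \Lambda_{D,q_2}g_2, g_1\rangle_{\mathbb{H}^*\times\mathbb{H}} = \mathbb{B}_{q_2}(u_2, u_1),
\]
and then use the symmetry of the bilinear form $\mathbb{B}_{q_2}(u_2, u_1) = \mathbb{B}_{q_2}(u_1, u_2)$, which is immediate from the symmetry $\mathscr{K}_s(x,z) = \mathscr{K}_s(z,x)$ recorded in the preceding section and from commutativity of the potential term.

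Finally, subtracting the two identities and noting that the double-integral kernel pieces of $\mathbb{B}_{q_1}$ and $\mathbb{B}_{q_2}$ are identical, one finds
\[
\mathbb{B}_{q_1}(u_1,u_2) - \mathbb{B}_{q_2}(u_1,u_2) = \int_{\Omega\setminus\overline{D}}(q_1-q_2)u_1 u_2\,dx,
\]
which is precisely the stated right-hand side once rewritten using $r_{\Omega\setminus\overline{D}}u_j$. There is no substantive obstacle here: the only minor points to verify are the freedom to select $u_2$ as a representative of $g_2$ in the quotient space $\mathbb{H}$ and the fact that the obstacle boundary conditions ($\mathcal{B}u_j = 0$ in $D$) do not interfere—since the final integrand is restricted to $\Omega\setminus\overline{D}$ via $r_{\Omega\setminus\overline{D}}$, any behavior of $u_1,u_2$ inside $D$ is irrelevant to the identity. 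Essentially this is a direct nonlocal analogue of the Alessandrini identity, obtained by swapping the roles of the potentials in the symmetric bilinear form.
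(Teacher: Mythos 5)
Your proof is correct and follows the same standard Alessandrini-identity argument that the paper invokes by citing \cite[Lemma 2.5]{ghosh2016calder}: express both pairings through the symmetric bilinear form $\mathbb{B}_q$, use the symmetry of the DtN map (Proposition~\ref{prop:DNmap}) to rewrite the $q_2$-pairing with $u_2$ in the solution slot, and observe that the $q$-independent kernel pieces cancel, leaving only the potential terms over $\Omega\setminus\overline{D}$. The paper does not write out this computation, so there is no substantive difference to report.
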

\begin{proof}
	The proof is similar to \cite[Lemma 2.5]{ghosh2016calder}. 
\end{proof}

\section{Recovery of the obstacle $D$\label{Section 4}}

In this section, we show that the obstacle $D$ can be uniquely recovered by a single measurement. The following strong uniqueness property shall be needed. 

\begin{prop}
	\cite[Theorem 1.2]{ghosh2017calderon} \label{Prop Strong unique}For
	$n\geq2$ and $0<s<1$. If $u\in H^{s}(\mathbb{R}^{n})$ satisfies
	$u=\mathscr{L}_A^{s}u=0$ in any nonempty open set $U\subset\mathbb{R}^{n}$,
	then $u\equiv0$ in $\mathbb{R}^{n}$. 
\end{prop}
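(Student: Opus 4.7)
The plan is to reduce the strong uniqueness property for $\mathscr{L}_A^s$ to a unique continuation statement for a local degenerate elliptic operator via the Caffarelli--Silvestre--Stinga--Torrea extension. Given $u \in H^s(\mathbb{R}^n)$, I would first construct an extension $\mathcal{U}(x,y)$ on $\mathbb{R}^n \times (0,\infty)$ that satisfies a degenerate elliptic equation of the form
\begin{equation*}
\nabla_x \cdot (y^{1-2s} A(x) \nabla_x \mathcal{U}) + \partial_y(y^{1-2s} \partial_y \mathcal{U}) = 0 \quad \text{in } \mathbb{R}^n \times (0,\infty),
\end{equation*}
with trace $\mathcal{U}(\cdot,0) = u$ on $\mathbb{R}^n$ and with the weighted conormal derivative satisfying
\begin{equation*}
\lim_{y \to 0^+} y^{1-2s} \partial_y \mathcal{U}(x,y) = -c_s \, \mathscr{L}_A^s u(x),
\end{equation*}
for a positive constant $c_s$. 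This is the variable-coefficient analogue of the Caffarelli--Silvestre construction; the existence and trace properties can be obtained via the spectral representation of $\mathscr{L}_A$ and the heat semigroup formulas already recorded in Section~\ref{Section 2}.

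Next, I would exploit the hypothesis $u = \mathscr{L}_A^s u = 0$ in the nonempty open set $U$. Translated to the extended problem, this says that both the Dirichlet trace $\mathcal{U}|_{U \times \{0\}}$ and the weighted conormal trace $\lim_{y \to 0^+} y^{1-2s} \partial_y \mathcal{U}|_{U \times \{0\}}$ vanish. Reflecting $\mathcal{U}$ evenly across $\{y=0\}$ in a neighborhood of $U$ then gives a solution of a uniformly degenerate elliptic equation with $A_2$-Muckenhoupt weight $|y|^{1-2s}$ and smooth leading coefficients $A(x)$ on an open subset of $\mathbb{R}^{n+1}$, which vanishes to infinite order along $U \times \{0\}$.

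The core step is then to invoke a (strong) unique continuation principle for such degenerate elliptic equations with variable, smooth coefficients: this yields $\mathcal{U} \equiv 0$ in a connected neighborhood of $U \times \{0\}$ in $\mathbb{R}^{n+1}$, and a standard connectedness/propagation-of-smallness argument extends this vanishing to all of $\mathbb{R}^n \times (0,\infty)$. Taking the trace at $y = 0$ concludes $u \equiv 0$ in $\mathbb{R}^n$.

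The main obstacle is precisely this unique continuation step: one needs a Carleman estimate adapted simultaneously to the $A_2$ weight $|y|^{1-2s}$ and to the variable anisotropic coefficients $A(x)$. For the case $A = \mathrm{Id}$ this is classical (Rüland); for smooth elliptic $A(x)$ the Carleman weight must be designed using the geodesic distance associated with the metric induced by $A$, and one must handle the interaction between the degenerate weight in $y$ and the $x$-dependent principal part. Once this Carleman estimate is in place, the vanishing-to-infinite-order conclusion at the boundary and the propagation across $\{y = 0\}$ follow by a routine iteration argument.
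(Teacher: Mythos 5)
The paper does not prove Proposition~\ref{Prop Strong unique}; it is stated as a citation of \cite[Theorem~1.2]{ghosh2017calderon}, so there is no in-paper argument to compare against. Your sketch follows the same general route as that reference: the Stinga--Torrea/Caffarelli--Silvestre extension of $\mathscr{L}_A^s$ to a degenerate elliptic equation with the $A_2$ weight $y^{1-2s}$ on $\mathbb{R}^n \times (0,\infty)$, identification of the weighted conormal derivative with $-c_s\,\mathscr{L}_A^s u$, even reflection across $\{y=0\}$ over $U$ where both Cauchy data vanish, and then a unique continuation principle for the resulting local degenerate-elliptic operator. This is indeed the correct strategy, and the extension equation and Neumann trace you write down are the right variable-coefficient analogues.

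Two points should be flagged. First, the logic around infinite-order vanishing is muddled: you assert, as an immediate consequence of the reflection, that the reflected solution ``vanishes to infinite order along $U \times \{0\}$,'' but having both Cauchy data vanish on a codimension-one hypersurface does not by itself give infinite-order vanishing. In the known proofs (R\"uland for $A=\mathrm{Id}$, and \cite{ghosh2017calderon} for smooth variable $A$), the infinite-order vanishing on the thin set $U \times \{0\}$ is itself a nontrivial conclusion obtained via an Almgren-type frequency function or doubling estimate, after which one applies strong unique continuation to propagate into the bulk; you partially acknowledge this at the end (``once this Carleman estimate is in place\ldots the vanishing-to-infinite-order conclusion\ldots follow'') which makes the earlier assertion circular. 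Second, the Carleman estimate for the degenerate operator $\nabla_x\cdot(y^{1-2s}A(x)\nabla_x\cdot) + \partial_y(y^{1-2s}\partial_y\cdot)$ with $x$-dependent, anisotropic principal part is exactly the technical core of \cite{ghosh2017calderon}, and you leave it entirely as a black box (your remark about using the $A$-geodesic distance as Carleman weight is plausible but unsubstantiated). So the proposal is a correct roadmap to the cited theorem rather than an independent proof; since the paper itself only cites the result, this level of detail is arguably parallel to what the paper offers, but it should not be mistaken for a self-contained argument.
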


Now we can prove the first statement of Theorem \ref{Main Theorem 1}. 

\begin{thm}
	\label{thm:uniqueness of D} Let $\Omega$ be a bounded open set in
	$\mathbb{R}^{n}$, $D_{1},D_{2}\Subset\Omega$ be two open subsets and
	$\mathcal{O}_1,\mathcal O_2\subset\Omega_{e}$ be arbitrary nonempty open sets.
	Let $q_j\in L^{\infty}(\Omega\backslash\overline{D})$ satisfy \eqref{eq:eigenvalue condition} and $u_{j}\in H^{s}(\mathbb{R}^{n})$ be the unique (weak) solution
	of 
	\[
	\begin{cases}
	\mathscr{L}_A^{s}u_{j}+q_{j}u_{j}=0 & \mbox{ in }\Omega\backslash\overline{D_{j}},\\
	\mathcal Bu_{j}=0  & \mbox{ in }D_{j},
	\end{cases}
	\]
	for $j=1,2$. Besides, when $\mathcal Bu_j=\mathscr L_A^s u_j$, we further assume $q_j(x) \neq 0$ for $x\in \Omega$ for $j=1,2$. Suppose that $\Lambda_{D_{1},q_{1}}g=\Lambda_{D_{2},q_{2}}g$
	in $\mathcal{O}_2$, for any given nonzero $g\in C_{c}^{\infty}(\mathcal{O}_1)$
	with $u_{1}=u_{2}=g$ in $\Omega_{e}$, then $D_{1}=D_{2}$.\end{thm}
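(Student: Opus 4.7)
The plan is to apply the strong uniqueness property of Proposition~\ref{Prop Strong unique} twice: first to upgrade the exterior DtN agreement into the global identity $u_1\equiv u_2$ on $\mathbb{R}^n$, and then on a nonempty open portion of the symmetric difference of $D_1$ and $D_2$ to force $u_2\equiv 0$, contradicting $g\not\equiv 0$.

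To carry out the first step, set $w:=u_1-u_2$. Since $u_1=u_2=g$ on $\Omega_e$, we have $w\equiv 0$ on $\Omega_e$, and in particular on $\mathcal{O}_2$. Using the identity $\Lambda_{D,q}g=\mathscr{L}_A^s u_g|_{\Omega_e}$ recorded after Proposition~\ref{prop:DNmap}, the hypothesis $\Lambda_{D_1,q_1}g|_{\mathcal{O}_2}=\Lambda_{D_2,q_2}g|_{\mathcal{O}_2}$ translates into $\mathscr{L}_A^s w=0$ on $\mathcal{O}_2$. Thus $w$ and $\mathscr{L}_A^s w$ both vanish on the nonempty open set $\mathcal{O}_2$, and Proposition~\ref{Prop Strong unique} yields $w\equiv 0$ on $\mathbb{R}^n$, i.e.\ $u_1\equiv u_2$.

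For the second step, assume for contradiction $D_1\neq D_2$. After possibly swapping indices I may assume the set $E:=D_1\setminus\overline{D_2}$ is nonempty; it is then open, since $D_1$ is open and $\overline{D_2}$ is closed, and satisfies $E\subset\Omega\setminus\overline{D_2}$. The equation $(\mathscr{L}_A^s+q_2)u_2=0$ holds on $E$, and via $u_1\equiv u_2$ the obstacle condition on $u_1$ inside $D_1$ is inherited by $u_2$ on $E$. In the soft case $\mathcal{B}u=u$, one gets $u_2=u_1=0$ on $E$, and the equation then forces $\mathscr{L}_A^s u_2=0$ on $E$. In the hard case $\mathcal{B}u=\mathscr{L}_A^s u$, one gets $\mathscr{L}_A^s u_2=\mathscr{L}_A^s u_1=0$ on $E$, and the equation reduces to $q_2 u_2=0$ on $E$; the standing assumption $q_2(x)\neq 0$ in $\Omega$ then forces $u_2=0$ on $E$. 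In either case $u_2=\mathscr{L}_A^s u_2=0$ on the nonempty open set $E$, so a second application of Proposition~\ref{Prop Strong unique} gives $u_2\equiv 0$ on $\mathbb{R}^n$, contradicting $u_2|_{\Omega_e}=g\not\equiv 0$.

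The essential obstacle is really the availability, not the deployment, of the strong uniqueness property for $\mathscr{L}_A^s$: this is the deep nonlocal tool, absent at $s=1$, that makes a single exterior measurement detect the obstacle. A minor technical point is the extraction of the open set $E$: if $D_1\setminus\overline{D_2}$ and $D_2\setminus\overline{D_1}$ were both empty one would only recover $D_1$ and $D_2$ up to their topological boundaries, but such discrepancies are invisible to the weak formulation in $H^s(\mathbb{R}^n)$ and therefore do not affect the conclusion.
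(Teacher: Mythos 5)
Your proof is correct and follows essentially the same two-step strategy as the paper: first upgrade $w=u_1-u_2$ vanishing together with $\mathscr{L}_A^s w$ on $\mathcal{O}_2$ to $u_1\equiv u_2$ via the strong uniqueness property, then apply strong uniqueness a second time on a nonempty open piece of the symmetric difference of $D_1,D_2$, where the obstacle condition of one solution combined with the interior equation of the other forces both the function and its fractional image to vanish. The only cosmetic difference is that you work in $D_1\setminus\overline{D_2}$ (using $u_1$'s obstacle condition with $u_2$'s equation) whereas the paper picks $M\Subset D_2\setminus\overline{D_1}$ (using $u_2$'s obstacle condition with $u_1$'s equation); this mirror choice is immaterial under the symmetric hypotheses of the theorem.
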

\begin{proof}
	First, we prove that $u_{1}=u_{2}$ in $\mathbb{R}^{n}$ whenever
	$\Lambda_{D_{1},q_{1}}g=\Lambda_{D_{2},q_{2}}g$ in $\mathcal{O}_2$
	and $u_{1}=u_{2}=g$ in $\Omega_{e}$ for the non-identically zero function $g\in C^\infty _c(\mathcal O_1)$. 
	
	Let $w:=u_{1}-u_{2}\in\widetilde{H}^{s}(\Omega)$,
	Then $w$ solves 
	\[
	\begin{cases}
	\mathscr{L}_A^{s}w+q_{1}w=(q_{2}-q_{1})u_{2} & \mbox{ in }\Omega\backslash(\overline{D_{1}}\cup\overline{D_{2}}),\\
	w=0 & \mbox{ in }\Omega_{e}.
	\end{cases}
	\]
	From the condition $\Lambda_{D_{1},q_{1}}g=\Lambda_{D_{2},q_{2}}g$
	in $\mathcal{O}_2$ and $\Lambda_{D_{j},q_{j}}g=\mathscr{L}_A^{s}u_{j}|_{\Omega_{e}}$,
	one can see that 
	\[
	\mathscr{L}_A^{s}w=\mathscr{L}_A^{s}(u_{1}-u_{2})=0\mbox{ in }\mathcal{O}_2\subset\Omega_{e}.
	\]
	In particular, we have $w\in H^{s}(\mathbb{R}^{n})$ such that $w=\mathscr{L}_A^{s}w=0$
	in $\mathcal{O}_2$. By applying the strong uniqueness property (Proposition
	\ref{Prop Strong unique}), we obtain $w\equiv0$ in $\mathbb{R}^{n}$,
	which shows $u_{1}=u_{2}$ in $\mathbb{R}^{n}$. 
	
	Second, we claim that $D_{1}=D_{2}$ in $\mathbb{R}^{n}$ by using
	contradiction arguments. Suppose that $D_{1}\neq D_{2}$. Without
	loss of generality, we can assume that there exists a nonempty open subset $M\Subset D_{2}\backslash\overline{D_{1}}$. Then we have the following two cases.
	
	\textbf{Case 1.}
	\begin{align}\notag
	\begin{cases}
	\text{Either }u_1 =0\text{ in }D_1  \text{ or }\mathscr L_A^su_1 =0 \text{ in }D_1,\\
	u_2=0\text{ in }D_2.
	\end{cases}
	\end{align}
	
	By using the condition $u_{1}=u_{2}$ in $\mathbb{R}^{n}$, we know
	that $u_{1}=u_{2}=0$ in $M\Subset D_{2}$. Applying the nonlocal elliptic
	equation for $u_{1}$ in $M$, it is readily seen that 
	\[
	\mathscr{L}_A^{s}u_{1}=u_{1}=0\mbox{ in }M.
	\]
	Utilizing the strong uniqueness property again, we obtain that $u_{1}\equiv0$
	in $\mathbb{R}^{n}$.
	
	\textbf{Case 2.} 
	\begin{align}\notag
	\begin{cases}
	\text{Either }u_1 =0 \text{ in }D_1 \text{ or }\mathscr L_A^su_1 =0 \text{ in }D_1,\\
	\mathscr L^s_A u_2=0\text{ in }D_2.
	\end{cases}
	\end{align}
	
	Recall that $u_1=u_2$ in $\mathbb R^n$, then $\mathscr L_A^s u_1 =\mathscr L_A^s u_2$ in $\mathbb R^n$ by using a direct calculation. Hence, $\mathscr L_A^s u_1=\mathscr L_A^s u_2 =0$ in $M\Subset D_2\setminus \overline{D_1}$. By using the equation of $u_1$ and $q_1(x)\neq 0$ for $x\in \Omega$, we have 
	\begin{align}\notag
	u_1=\mathscr L_A^s u_1 =0\text{ in }M.
	\end{align}
	Therefore, we have $u_1\equiv 0 $ in $\mathbb R^n$ by the strong uniqueness property.  
	
	However, in either Case 1 or Case 2, the conclusion $u_1\equiv 0$ in $\mathbb R^n$ contradicts to the fact that $u_{1}=g$
	in $\Omega_{e}$ with a non-identically zero exterior data $g$. This proves
	the first part of Theorem \ref{Main Theorem 1} to Theorem \ref{Main Theorem 3} by using a single exterior measurement.

\end{proof}
\begin{rem}
	Indeed, we do not need to use any information about the solution $w$
	in $\Omega\backslash(\overline{D_{1}}\cup\overline{D_{2}})$. We only
	utilize the strong uniqueness of $w$ in the exterior domain $\Omega_{e}$,
	which is a powerful tool in dealing with the nonlocal type inverse
	problems.
\end{rem}

\section{Recovery of the surrounding potential $q$\label{Section 5}}

In this section, we prove the uniqueness in determining the surrounding potential $q$ in $\Omega \setminus \overline{D}$.

\subsection{Runge approximation property}

We shall make essential use of the following Runge approximation property for solutions
of the nonlocal elliptic equation. If $q\in L^{\infty}(\Omega\backslash\overline{D})$
satisfies the eigenvalue condition \eqref{eq:eigenvalue condition},
we denote the solution operator $\Phi_{q}$ by: 
\begin{equation}\notag
\Phi_{q}:\mathbb{H}\rightarrow H^{s}(\mathbb{R}^{n}),g\rightarrow u
\end{equation}
where $\mathbb{H}:=H^{s}(\mathbb{R}^{n})/\widetilde{H}^{s}(\Omega),$
is the abstract space of exterior values, and $u\in H^{s}(\mathbb{R}^{n})$
is the unique solution of $(\mathscr{L}_A^{s}+q)u=0$ in $\Omega\backslash\overline{D}$
with $\mathcal{B}u=0$ in $D$ and $u-g\in\widetilde{H}^{s}(\Omega)$.
\begin{lem}
	\label{lemma:approxiamtion} Let $\Omega\subseteq\mathbb{R}^{n}$
	be a bounded open set and $D\Subset\Omega$ be an open subset. Assume that
	$s\in(0,1)$ and $q\in L^{\infty}(\Omega\setminus\overline{D})$ satisfies
	\eqref{eq:eigenvalue condition}. Let $\mathcal{O}$ be any open
	set of $\Omega_{e}$. Consider the set 
	\[
	\mathbb{A}:=\{\left.u\right|_{\Omega\backslash\overline{D}}:u=\Phi_{q}g,g\in C_{c}^{\infty}(\mathcal{O})\}\cap \{\mathcal Bu=0\text{ in }D\}.
	\]
	Then $\mathbb{A}$ is dense in $L^{2}(\Omega\backslash\overline{D})$. \end{lem}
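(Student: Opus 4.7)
The plan is to run a standard Hahn--Banach duality argument and reduce the density of $\mathbb{A}$ to the strong uniqueness property of Proposition~\ref{Prop Strong unique}. Suppose, for contradiction, that $\mathbb{A}$ is not dense in $L^2(\Omega\setminus\overline{D})$. Hahn--Banach then furnishes a nonzero $f\in L^2(\Omega\setminus\overline{D})$ annihilating $\mathbb{A}$, namely
\[
\int_{\Omega\setminus\overline{D}} f\,(\Phi_q g)\big|_{\Omega\setminus\overline{D}}\,dx = 0 \qquad \text{for every } g\in C_c^\infty(\mathcal{O}).
\]
The goal is to show $f\equiv 0$, which contradicts the Hahn--Banach conclusion.

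First I would introduce the adjoint solution $v\in H^s(\mathbb{R}^n)$, defined as the unique solution of
\[
(\mathscr{L}_A^s + q)v = f \ \text{in}\ \Omega\setminus\overline{D}, \qquad \mathcal{B}v = 0 \ \text{in}\ D, \qquad v = 0 \ \text{in}\ \Omega_e,
\]
whose well-posedness comes from Lemma~\ref{lemma:equivalence} together with the eigenvalue condition (symmetry of the kernel $\mathscr{K}_s$ and realness of $q$ make $\mathscr{L}_A^s+q$ self-adjoint). By construction $v\equiv 0$ on $\mathcal{O}\subset\Omega_e$, so the task reduces to proving the additional relation $\mathscr{L}_A^s v = 0$ in $\mathcal{O}$: once this is in hand, Proposition~\ref{Prop Strong unique} yields $v\equiv 0$ in $\mathbb{R}^n$, and then $f=(\mathscr{L}_A^s+q)v\equiv 0$ gives the contradiction.

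The key step is to swap $u:=\Phi_q g$ and $v$ through the bilinear form $\mathbb{B}_q$. Writing $u=(u-g)+g$, the function $u-g$ lies in $\widetilde{H}^s(\Omega)$ (and, in the soft case, even in $\widetilde{H}^s(\Omega\setminus\overline{D})$, since both $u$ and $g$ vanish in $D$), so it is an admissible test function for the weak formulation of $v$, giving $\mathbb{B}_q(v,u-g)=\int_{\Omega\setminus\overline{D}} f u\,dx$ because $g\equiv 0$ on $\Omega\setminus\overline{D}$. Symmetrically, the obstacle condition $\mathcal{B}v=0$ in $D$ makes $v$ admissible as a test function for the homogeneous equation satisfied by $u$, yielding $\mathbb{B}_q(u,v)=0$. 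By the symmetry of $\mathbb{B}_q$ one obtains
\[
0=\mathbb{B}_q(v,u)=\int_{\Omega\setminus\overline{D}}f u\,dx+\mathbb{B}_q(v,g),
\]
and the integral representation \eqref{eq:integral represent for nonlocal} together with $\mathrm{supp}(g)\subset\mathcal{O}\subset\Omega_e$ (killing the $q$-contribution) identifies $\mathbb{B}_q(v,g)=\int_{\mathcal{O}}g\,\mathscr{L}_A^s v\,dx$. Combined with the orthogonality $\int f u\,dx=0$ this forces $\int_{\mathcal{O}}g\,\mathscr{L}_A^s v\,dx=0$ for all $g\in C_c^\infty(\mathcal{O})$, so $\mathscr{L}_A^s v=0$ in $\mathcal{O}$, and the proof is complete.

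The main technical obstacle I anticipate is the careful bookkeeping needed to handle the soft ($\mathcal{B}u=u$) and hard ($\mathcal{B}u=\mathscr{L}_A^s u$) obstacles within a single argument, in particular verifying that $v$ and $u-g$ lie in the correct $\widetilde{H}^s$-space so that they serve as legitimate test functions and the contributions over $D$ cancel. This is precisely where the obstacle condition $\mathcal{B}v=0$ in $D$ enters decisively: in the soft case it places $v$ in $\widetilde{H}^s(\Omega\setminus\overline{D})$, while in the hard case it allows the equation on $\Omega\setminus\overline{D}$ to be promoted to a global equation on $\Omega$ with potential $q\chi_{\Omega\setminus\overline{D}}$, recovering the very same identity.
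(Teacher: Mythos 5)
Your proof is correct and follows essentially the same route as the paper: a Hahn--Banach duality reduction, solving the adjoint problem $(\mathscr{L}_A^s+q)\phi = v$ with homogeneous exterior data (your $v$ is the paper's $\phi$, your $f$ is the paper's $v$), manipulating the bilinear form $\mathbb{B}_q$ to establish $\mathscr{L}_A^s\phi=\phi=0$ on $\mathcal{O}$, and concluding via the strong uniqueness property of Proposition~\ref{Prop Strong unique}. The only difference is presentational: you explicitly spell out the soft/hard case bookkeeping for the test-function memberships, whereas the paper writes the argument for the soft case and defers the hard case to the remark that follows.
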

\begin{proof}
	The proof follows a similar argument to that of \cite[Lemma 5.7]{ghosh2017calderon}. For
	the completeness of this paper, we present a detailed proof in what follows. 
	
	By the Hahn-Banach theorem, it is sufficient to show that for any
	$v\in L^{2}(\Omega\backslash\overline{D})$ with $\int_{\Omega\backslash\overline{D}}vw\,dx=0$
	for all $w\in\mathbb{A}$, then it must satisfy $v\equiv0$ in $\Omega\setminus\overline{D}$.
	If $v$ is a such function, which means $v$ satisfies 
	\begin{equation}
	\int_{\Omega\backslash\overline{D}}v\cdot r_{\Omega\backslash\overline{D}}\Phi_{q}g\,dx=0,\mbox{ for any }g\in C_{c}^{\infty}(\mathcal{O}).\label{eq:equal}
	\end{equation}
	We claim that 
	\begin{equation}
	\int_{\Omega\backslash\overline{D}}v\cdot r_{\Omega\backslash\overline{D}}\Phi_{q}g\,dx=-\mathbb{B}_{q}(\phi,g),\mbox{ for any }g\in C_{c}^{\infty}(\mathcal{O}),\label{eq:claim}
	\end{equation}
	where $\phi\in H^{s}(\mathbb{R}^{n})$ is the solution given by Lemma
	\ref{lemma:equivalence} of 
	\[
	(\mathscr{L}_A^{s}+q)\phi=v\in\Omega\backslash\overline{D},\quad\phi\in\widetilde{H}^{s}(\Omega\backslash\overline{D})
	\]
	In other words, $\mathbb{B}_{q}(\phi,w)=\int_{\Omega\backslash\overline{D}}v\cdot r_{\Omega\backslash\overline{D}}w\,dx$
	for any $w\in\widetilde{H}^{s}(\Omega\backslash\overline{D})$. To
	prove \eqref{eq:claim}, let $g\in C_{c}^{\infty}(\mathcal{O})$,
	and we denote $u_{g}:=\Phi_{q}g\in\widetilde{H}^{s}(\mathbb{R}^{n})$
	such that $u_{g}-g\in\widetilde{H}^{s}(\Omega)$. Then we have 
	\[
	\int_{\Omega\backslash\overline{D}}v\cdot r_{\Omega\backslash\overline{D}}\Phi_{q}g\,dx=\int_{\Omega\backslash\overline{D}}v\cdot r_{\Omega\backslash\overline{D}}(u_{g}-g)\,dx=\mathbb{B}_{q}(\phi,u_{g}-g)=-\mathbb{B}_{q}(\phi,g)
	\]
	in which we have used the fact that $u_{g}$ is a solution and $\phi\in\widetilde{H}^{s}(\Omega\backslash\overline{D})$.
	
	Combining \eqref{eq:equal} and \eqref{eq:claim}, we can obtain that
	\[
	\mathbb{B}_{q}(\phi,g)=0,\mbox{ for any }g\in C_{c}^{\infty}(\mathcal{O})
	\]
	Using the fact that $r_{\Omega\backslash\overline{D}}g=0$, since $g\in C_{c}^{\infty}(\mathcal{O})$
	we can derive that 
	\[
	\int_{\mathbb{R}^{n}}\mathscr{L}_A^{s}\phi\cdot g\,dx=0\mbox{ for any }g\in C_{c}^{\infty}(\mathcal{O}),
	\]
	and thus we obtain that $\phi\in H^{s}(\mathbb{R}^{n})$ satisfies
	\[
	\left.\mathscr{L}_A^{s}\phi\right|_{\mathcal{O}}=\left.\phi\right|_{\mathcal{O}}=0.
	\]
	By the strong uniqueness property again, we know that $\phi\equiv0$
	in $\mathbb{R}^{n}$ and also $v\equiv0$ in $\Omega\backslash\overline{D}$.
	This finishes the proof. 
\end{proof}

\begin{rem}
It is easy to see that the soft or hard condition $\mathcal Bu=0$ in $D$ does not affect the conclusion of the previous lemma.
\end{rem}

\subsection{Proof of the uniqueness in determining $q$}

From the equal DtN maps, by the first statements of Theorems~\ref{Main Theorem 1}--\ref{Main Theorem 3}, we know that the embedded obstacle $D$ is uniquely recovered. Next,
we prove the global uniqueness in determining the potential $q\in L^{\infty}(\Omega\backslash\overline{D})$.

\begin{thm}
	\label{thm:uniqueness of q} For $n\geq2$, let $\Omega$ be a bounded
	open set in $\mathbb{R}^{n}$, $D\Subset\Omega$ be an open subset and and
	$\mathcal{O}_1,\mathcal O_2\subset\Omega_{e}$ be an arbitrary nonempty open set.
	Let $q_j\in L^{\infty}(\Omega\backslash\overline{D})$ satisfy
	\eqref{eq:eigenvalue condition} and $u_{j}\in H^{s}(\mathbb{R}^{n})$ be the unique (weak) solution
	of 
	\[
	\begin{cases}
	\mathscr{L}_A^{s}u_{j}+q_{j}u_{j}=0 & \mbox{ in }\Omega\backslash\overline{D_{j}},\\
	\mathcal Bu_{j}=0  & \mbox{ in }D_{j},
	\end{cases}
	\]
    Assume that $\Lambda_{D,q_{j}}$ are the
	DtN maps with respect to $(\mathscr{L}_A^{s}+q_{j})u_j=0$ for $j=1,2$. If 
	\[
	\left.\Lambda_{D,q_{1}}g\right|_{\mathcal{O}_2}=\left.\Lambda_{D,q_{2}}g\right|_{\mathcal{O}_2}
	\]
	for any $g\in C_{c}^{\infty}(\mathcal{O}_1)$
	with $u_{1}=u_{2}=g$ in $\Omega_{e}$, then one can conclude that 
	\[
	q_{1}=q_{2}\mbox{ in }\Omega\backslash\overline{D}
	\]
\end{thm}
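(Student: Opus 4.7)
The plan is to combine the integral identity of Lemma \ref{prop:Integral identity} with the Runge approximation property of Lemma \ref{lemma:approxiamtion}. Since the first statements of Theorems \ref{Main Theorem 1}--\ref{Main Theorem 3} already give $D_1=D_2=:D$, it suffices to treat a fixed obstacle $D$ with the same boundary condition $\mathcal{B}u=0$ in $D$ on both sides.

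First, I would pick test data $g_1\in C_c^{\infty}(\mathcal O_1)$ and $g_2\in C_c^{\infty}(\mathcal O_2)$, let $u_j\in H^s(\mathbb R^n)$ be the corresponding solution of $(\mathscr L_A^s+q_j)u_j=0$ in $\Omega\setminus\overline D$ with $\mathcal B u_j=0$ in $D$ and $u_j=g_1$ in $\Omega_e$, and apply Lemma \ref{prop:Integral identity}. Since $g_2$ is supported in $\mathcal O_2$ and $\Lambda_{D,q_1}g_1|_{\mathcal O_2}=\Lambda_{D,q_2}g_1|_{\mathcal O_2}$ by hypothesis, the left-hand side of the identity vanishes, yielding
\begin{equation}\notag
\int_{\Omega\setminus\overline D}(q_1-q_2)\,r_{\Omega\setminus\overline D}u_1\;r_{\Omega\setminus\overline D}v_2\,dx=0,
\end{equation}
where $v_2$ is the solution associated to $q_2$ with exterior data $g_2\in C_c^\infty(\mathcal O_2)$, and $u_1$ is the solution associated to $q_1$ with exterior data $g_1\in C_c^\infty(\mathcal O_1)$.

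Next, I would invoke Lemma \ref{lemma:approxiamtion} twice, once for $q_1$ with the exterior source region $\mathcal O_1$, and once for $q_2$ with $\mathcal O_2$. This produces two dense subsets
\begin{equation}\notag
\mathbb A_1:=\{u|_{\Omega\setminus\overline D}:u=\Phi_{q_1}g,\ g\in C_c^\infty(\mathcal O_1)\},\qquad
\mathbb A_2:=\{v|_{\Omega\setminus\overline D}:v=\Phi_{q_2}g,\ g\in C_c^\infty(\mathcal O_2)\}
\end{equation}
in $L^2(\Omega\setminus\overline D)$. Given any target pair $(f_1,f_2)\in L^2(\Omega\setminus\overline D)^2$, choose sequences $u_1^{(k)}\in\mathbb A_1$ and $v_2^{(k)}\in\mathbb A_2$ converging to $f_1$ and $f_2$ respectively in $L^2(\Omega\setminus\overline D)$. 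Since $q_1-q_2\in L^\infty(\Omega\setminus\overline D)$, the trilinear form $(a,b,c)\mapsto\int(q_1-q_2)ab\,c$ is continuous on $L^2\times L^2$, so passing to the limit in the identity above gives
\begin{equation}\notag
\int_{\Omega\setminus\overline D}(q_1-q_2)\,f_1 f_2\,dx=0\quad\text{for all }f_1,f_2\in L^2(\Omega\setminus\overline D).
\end{equation}
Specializing $f_1\equiv 1$ and letting $f_2$ range over $C_c^\infty(\Omega\setminus\overline D)$ (or, even more directly, taking $f_1\equiv 1$ and $f_2=q_1-q_2$) forces $q_1=q_2$ a.e.\ in $\Omega\setminus\overline D$.

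The main obstacle I anticipate is ensuring that the Runge approximation applies cleanly under the embedded obstacle condition $\mathcal B u=0$ in $D$. Lemma \ref{lemma:approxiamtion} has already been stated for solutions respecting the obstacle condition, so this is handled; however one must verify that the test solutions $u_1^{(k)}$ and $v_2^{(k)}$ can be produced independently for the two potentials $q_1$ and $q_2$, even though both solve problems with the same obstacle $D$ (given by the first part of the theorem). Once this is noted, the rest is standard duality/density and the soft vs.\ hard obstacle distinction plays no further role, which is exactly why the conclusion applies uniformly to Theorems \ref{Main Theorem 1}, \ref{Main Theorem 2}, and \ref{Main Theorem 3}.
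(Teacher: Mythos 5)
Your proof is correct and follows essentially the same route as the paper: apply the integral identity with two independent exterior data sets $g_1\in C_c^\infty(\mathcal O_1)$, $g_2\in C_c^\infty(\mathcal O_2)$, then use the Runge approximation (Lemma \ref{lemma:approxiamtion}) for each potential $q_j$ to approximate prescribed $L^2$ targets, and pass to the limit using $q_1-q_2\in L^\infty$. The only cosmetic difference is that the paper approximates a fixed target $\varphi$ and the constant $1$ directly, whereas you first derive the identity for arbitrary $(f_1,f_2)\in L^2\times L^2$ and then specialize; both yield $q_1=q_2$ a.e.\ in $\Omega\setminus\overline D$.
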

\begin{proof}
	Since $\left.\Lambda_{D,q_{1}}g\right|_{\mathcal{O}_2}=\left.\Lambda_{D,q_{2}}g\right|_{\mathcal{O}_2}$
	for any $g\in C_{c}^{\infty}(\mathcal{O}_1)$, where $\mathcal{O}_1,\mathcal O_2$
	are open sets of $\Omega_{e}$, substituting this condition into
	the integral identity in Lemma \ref{prop:Integral identity}, we have
	\begin{equation}
	\int_{\Omega\backslash\overline{D}}(q_{1}-q_{2})u_{1}u_{2}\,dx=0,\label{eq:newintegral}
	\end{equation}
	where $u_{j}\in H^{s}(\mathbb{R}^{n})$ is the solution of $(\mathscr{L}_A^{s}+q_{j})u_{j}=0$
	in $\Omega\backslash\overline{D}$ with the associated exterior values
	$g_{j}\in C_{c}^{\infty}(\mathcal{O}_j)$, for $j=1,2$ respectively.
	
	Given $\varphi\in L^{2}(\Omega\backslash\overline{D})$, by Proposition
	\ref{lemma:approxiamtion}, suppose there exist a sequences $(u_{j}^{(k)})_{k\in\mathbb{N}}$
	for $j=1,2$ of functions in $H^{s}(\mathbb{R}^{n})$, which satisfy
	\[
	\begin{split} & (\mathscr{L}_A^{s}+q_{1})u_{1}^{(k)}=(\mathscr{L}_A^{s}+q_{2})u_{2}^{(k)}=0\mbox{ in }\Omega\backslash\overline{D},\\
	&\mathcal{B}u_1^{(k)}=0 \text{ and } \mathcal{B}u_2^{(k)}=0 \text{ in D },\\
	& u_{j}^{(k)}=g_{j}^{(k)}\text{ in }\Omega_{e},\text{ for some exterior data }g_{j}^{(k)}\in C_{c}^{\infty}(\mathcal{O}),\\
	& r_{\Omega\backslash\overline{D}}u_{1}^{(k)}=\varphi+r_{1}^{(k)},r_{\Omega\backslash\overline{D}}u_{2}^{(k)}=1+r_{2}^{(k)},\text{ for any }k\in\mathbb{N},
	\end{split}
	\]
	where $r_{1}^{(k)},r_{2}^{(k)}\rightarrow0$ in $L^{2}(\Omega\backslash\overline{D})$
	as $k\rightarrow\infty$. Substituting these solutions into the integral
	identity \eqref{eq:newintegral} and taking the limit as $k\rightarrow\infty$,
	we can deduce that 
	\[
	\int_{\Omega\backslash\overline{D}}(q_{1}-q_{2})\varphi\,dx=0
	\]
	Since $\varphi\in L^{2}(\Omega\backslash\overline{D})$ is arbitrary, we readily see that $q_{1}=q_{2}$ in $\Omega\setminus\overline{D}$. This also completes the second part of Theorems \ref{Main Theorem 1}--\ref{Main Theorem 3}.
\end{proof}

\section*{Acknowledgement}
Y.-H. Lin is partially supported by MOST of Taiwan 160-2917-I-564-048 and he would like to thank Professor Ru-Yu Lai for helpful discussions.
H. Liu was supported by the FRG and startup grants from Hong Kong
Baptist University, and Hong Kong RGC General Research Funds, 12302415
and 12302017.


\end{document}